\documentclass[12pt, a4paper]{article}
\usepackage{blindtext}
\usepackage[a4paper, total={6in, 8.5in}]{geometry}
\usepackage{graphicx}
\usepackage{amsmath}
\usepackage{mathtools}
\usepackage{amssymb}
\usepackage{amsfonts}
\usepackage{dsfont}
\usepackage{stackengine} 
\usepackage[english]{babel}
\usepackage{amsthm}
\usepackage{caption}
\usepackage{subcaption}
\usepackage{xcolor}
\usepackage{soul}
\usepackage{csquotes}
\usepackage[T1]{fontenc}
\usepackage{tikz-cd}
\usepackage{adjustbox}
\usepackage{epsfig}
\usepackage{placeins}

\usepackage{hyperref}

\def\b{\mathbf}
\def\bb{\mathbb}
\def\cu{\mathcal}
\newtheorem{theorem}{Theorem}[section]
\newtheorem{corollary}{Corollary}[section]
\newtheorem{lemma}{Lemma}[section]
\newtheorem{prop}{Proposition}[section]
\newtheorem{remark}{Remark}[section]

\newcommand{\BB}{Bienaym\'e }
\newcommand{\Z}{\mathbb{Z}}
\newcommand{\R}{{\mathbb{R}}}
\newcommand{\N}{{\mathbb{N}}}
\newcommand{\E}{{\mathbb{E}}}
\newcommand{\pr}{{\mathbb{P}}}

\newcommand{\veps}{\varepsilon}
\title{\vspace{-0.8cm}\bf \Large The skeleton-blocks decomposition of \BB trees, and applications to their local convergence}
\author{\normalsize Marc A. Bernard\thanks{School of Mathematical and Physical Sciences, University of Sheffield, \url{MABernard1@sheffield.ac.uk}} \quad \& \hspace{0.2cm} Robin Stephenson\thanks{School of Mathematical and Physical Sciences, University of Sheffield, \url{robin.stephenson@normalesup.org}}}
\date{}

\begin{document}

\maketitle
\vspace{-1cm}
\begin{abstract} We study multi-type critical \BB trees through the lens of the skeleton-blocks structure, seeing them as a monotype skeleton on which we have added multi-type blocks. This decomposition enjoys a remarkably elegant compatibility with the concept of local convergence, allowing us to find simple proofs of traditional local limit theorems.
\end{abstract}

\section{Introduction}
Trees related to multi-type branching processes have received significant attention in recent literature, both on the continuous side~\cite{addarioberry2025scalinglimitsmultitypebienayme,Bertoin_Curien_Riera_2027} and the discrete side~\cite{stufler2022rerooting,tiltings2025}. One of the main questions on the discrete side is finding local limits of \BB trees. Kesten showed in \cite{Kesten} that a critical monotype \BB tree, when conditioned on having many vertices, converges locally in distribution to an infinite tree that we call the associated Kesten tree, and much of recent work has consisted in generalising this to the multi-type case, conditioning trees to be large in various ways.

In this paper, we tackle this topic by using what we call the \emph{skeleton-blocks decomposition} of \BB trees. The skeleton of a multi-type tree $\b{t}$ is the monotype tree $S(\b{t})$ obtained by removing all the non-root type vertices of $\b{t}$, but keeping the ancestral relations between the remaining elements. The blocks are then the parts which are ``between'' elements of the skeleton, with one block being attached to each vertex in the skeleton, see Figure \ref{fig:mainfig} for an example. Note that these ideas are not new: the skeleton already featured in \cite{M08} as the ``monotype projection'', and our blocks appear as the ``fringe subtrees'' of \cite{stufler2022rerooting}, while also being closely related to the ``blobs'' of \cite{addarioberry2025scalinglimitsmultitypebienayme}. However, when put together, these two concepts end up being very helpful to talk about local convergence of trees, and particularly that of \BB trees towards \textit{Kesten trees}. Kesten trees are defined in Section \ref{sec:defKesten}, and we just recall here that any critical \BB tree T, whether monotype or multi-type, has an associated Kesten tree written $\widehat{T},$ which serves as its ``infinite version''. Our main theorems is obtained directly once the theory is established:

\begin{theorem} \label{skeleton result}
Let $T$ be a non-singular, irreducible, and critical multi-type \BB tree. For $n\in\N$, let $A_n$ be an event that is measurable with respect to $\cu{S}(T)$ and let $T_n$ be distributed as $T$, conditioned on $A_n$. If $\cu{S}(T_n)\; \xrightarrow[n\rightarrow\infty]{\quad (d) \quad} \; \widehat{\cu{S}(T)}$, then \[T_n \; \xrightarrow[n\rightarrow\infty]{\quad (d) \quad} \; \widehat{T}.\]
\end{theorem}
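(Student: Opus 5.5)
The plan is to establish two structural facts about the skeleton-blocks decomposition and then combine them.

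\emph{First fact.} Conditionally on its skeleton $\cu{S}(T)$, the blocks of $T$ are independent. The block attached to a skeleton vertex $u$ with $k_u$ children in $\cu{S}(T)$ has the law of a generic block $B$ (the multi-type tree from a root-type vertex, stopped at root-type descendants) conditioned to have exactly $k_u$ stopped root-type leaves. The children of $u$ in the skeleton are matched with these leaves in a canonical order, for instance lexicographic order in $B$. This follows from the branching property applied at root-type vertices. The skeleton is then itself a monotype \BB tree whose offspring law is the law of the number $N$ of root-type leaves of $B$. Criticality and irreducibility of $T$ give $\E[N]=1$, and non-singularity gives $\pr(N\neq 1)>0$.

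\emph{Second fact.} The same description holds for the Kesten tree. The skeleton of $\widehat T$ is the monotype Kesten tree $\widehat{\cu{S}(T)}$. Given this skeleton, blocks at non-spine vertices are distributed as above. The block at a spine vertex with $k$ skeleton children is the size-biased block: it is $B$ biased by $N$, conditioned on $N=k$, so that its law is that of $B$ given $N=k$. A uniformly chosen root-type leaf is marked as the continuation of the spine. I would check this from the definition of $\widehat T$ in Section~\ref{sec:defKesten}, using the many-to-one decomposition along the spine. Since the marked leaf is uniform among the $k$ leaves and the spine child in the monotype Kesten tree is uniform among the $k$ children, the two descriptions agree. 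The point is that the law of the blocks given the skeleton depends only on the skeleton shape, and this is the key compatibility.

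\emph{Combining.} Because $A_n$ is $\cu{S}(T)$-measurable, conditioning on $A_n$ changes only the law of the skeleton. So $T_n$ is obtained by drawing $\cu{S}(T_n)$ and then attaching independent blocks through the same kernel. Hence $T_n=\Phi(\cu{S}(T_n),(\xi_v))$ and $\widehat T=\Phi(\widehat{\cu{S}(T)},(\xi_v))$ for a single measurable gluing map $\Phi$, with a common family $(\xi_v)$ of i.i.d.\ randomizations indexed by Ulam labels.

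To prove local convergence, fix a ball radius $r$ and a finite multi-type tree $\b t$, and compute $\pr([T_n]_r=\b t)$. The ball of radius $r$ in $T$ is determined by the skeleton ball of radius $r$ together with the blocks attached to those skeleton vertices. This holds because every edge of a block increases height by at least one, so a skeleton vertex at height greater than $r$ lies at tree height greater than $r$. Conditioning on the skeleton ball gives
\[
\pr([T_n]_r=\b t)=\sum_{s}\pr\bigl([\cu S(T_n)]_r=s\bigr)\,q_r(s,\b t),
\]
where $q_r(s,\b t)$ depends only on $s$. The same formula holds for $\widehat T$ with the same $q_r$, and convergence then follows term by term.

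\emph{Main obstacle.} Two points need care. The sum over $s$ is infinite unless the trees are locally finite, but local finiteness is implied by the local topology, so Scheffé's lemma or tightness handles it. More seriously, the spine is not visible from the finite skeleton ball, yet the blocks at spine vertices must be accounted for. This is resolved by the second fact: given the skeleton ball, the block kernel is the same whether or not a vertex lies on the spine, once the spine choice is integrated out. Making this precise is where I expect most of the work, and it would be done by an explicit computation from the definition of $\widehat T$.
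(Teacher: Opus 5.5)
Your proposal is correct and is essentially the paper's proof. In both $T$ conditioned on an $\cu{S}(T)$-measurable event and in $\widehat T$, the blocks given the skeleton are independent with laws $B^{k_u}$, and the skeleton of $\widehat T$ is $\widehat{\cu S(T)}$, so Corollary~\ref{cor:generalconvergence} reduces everything to the assumed skeleton convergence. The only differences are in execution: the paper gets the Kesten block law in one line from $\pr[r_n(\widehat T)=\b t]=z_n(\b t)\,\pr[r_n(T)=\b t]$ with $z_n(\b t)=z_n(s)$, rather than by your spine-by-spine size-biasing; and its metric truncates at $h$ root-type generations rather than at height $h$, so $r_h(T)$ is an exact function of $r_h(\cu S(T))$ and the blocks at $|u|<h$, and your infinite sum over skeleton balls (with the Scheffé step) collapses to a single term.
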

In other words: if we condition the multi-type tree only on its  skeleton, but in a manner such that the skeleton converges to its Kesten tree, then the whole tree converges to its multi-type Kesten tree. This is relevant because we have a wealth of results giving us conditions under which a monotype tree converges to its associated Kesten tree~\cite{AD14}. In particular, taking for $A_n$ the event where $T$ has $n$ individuals of root type (which is the same as asking the skeleton to have $n$ vertices total), Theorem \ref{skeleton result} gives us one of the central results of \cite{Robin}: conditioned on having many root-type vertices, the multi-type tree converges to its associated Kesten tree. The remarkable point is that the proof requires none of the technical arguments used in \cite{Robin}.

\begin{figure}
    \centering
    \caption{The skeleton-blocks decomposition of a 3-type tree}
    \begin{subfigure}{\linewidth}
        \includegraphics[width=\linewidth]{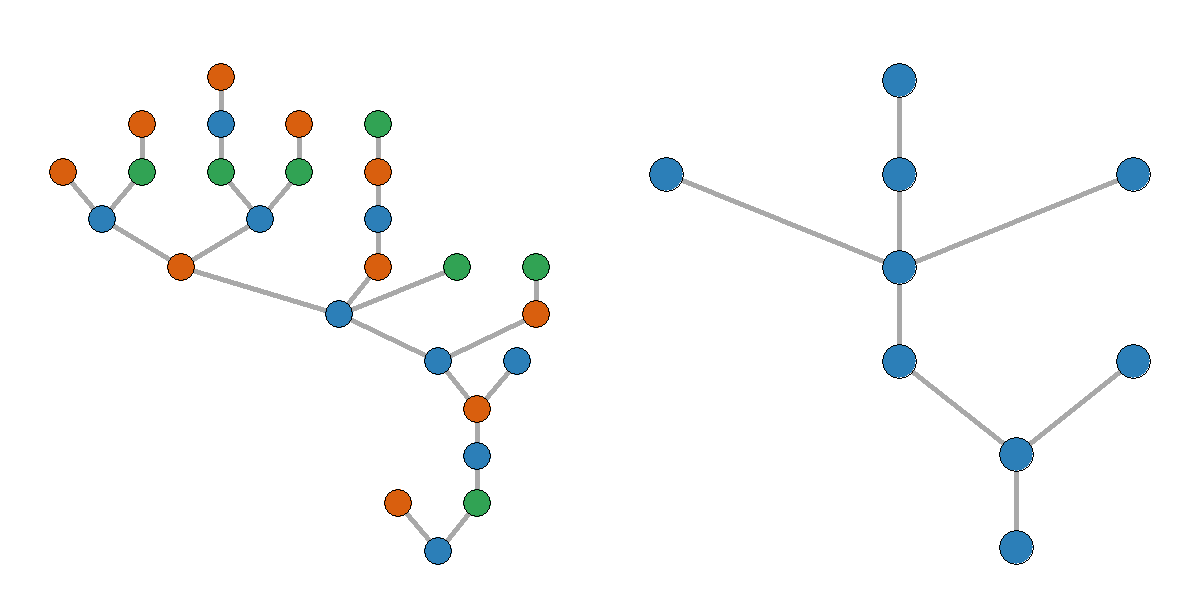}
    \caption{A 3-type tree and its skeleton.}\label{fig:mainfiga}
    \end{subfigure}
    \begin{subfigure}{\linewidth}\centering
        \includegraphics[width=0.75\linewidth]{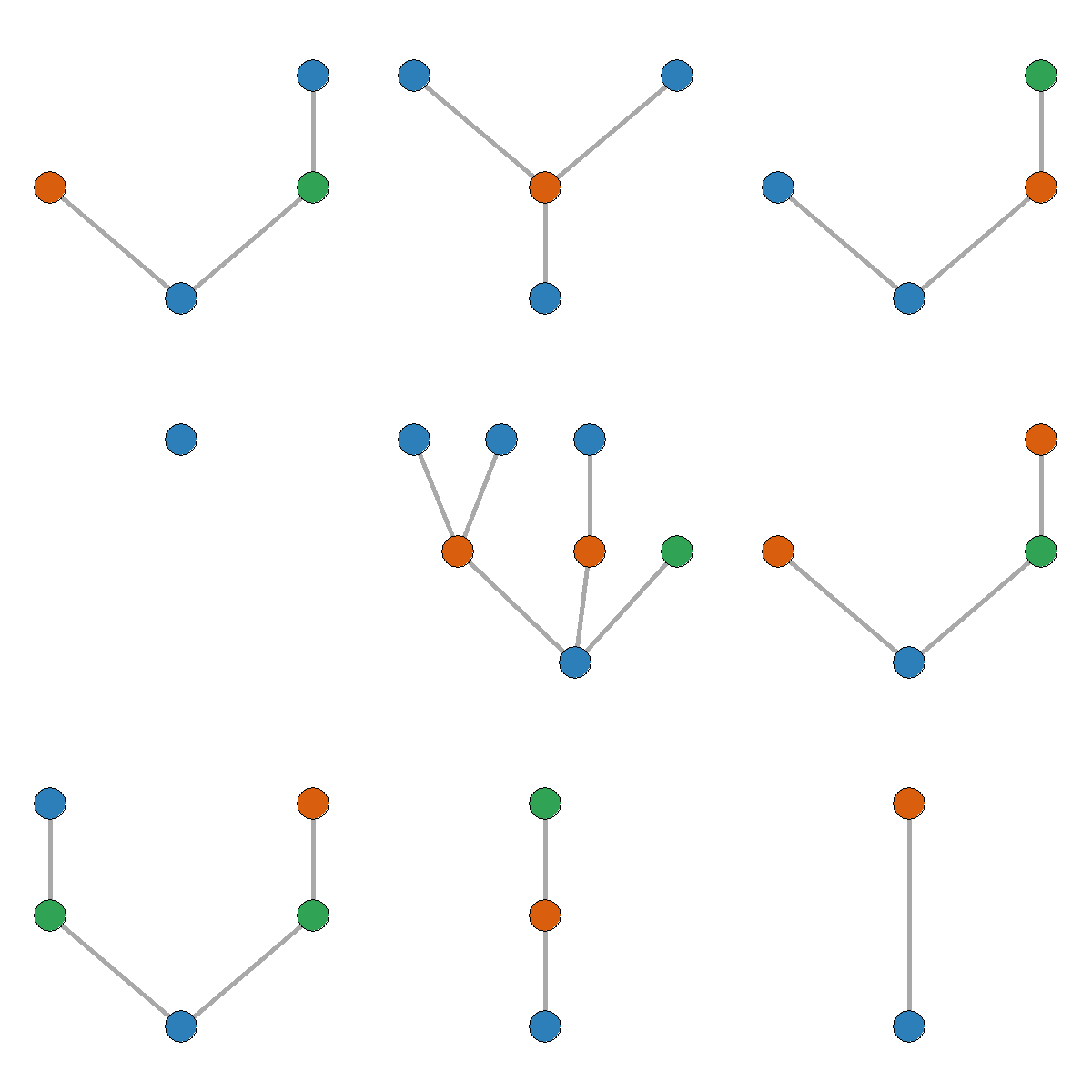}
        \caption{The blocks for the above tree, in breadth-first order.}\label{fig:mainfigb}
    \end{subfigure}
    
\end{figure}\label{fig:mainfig}
\FloatBarrier

\section{Background} \label{Background}

In this section, we set up some notation and give the necessary background on random discrete trees.
\subsection{Tree formalism and notation}
\subsubsection{Discrete Trees} \label{Discrete Trees}

We recall from Neveu's  formalism for discrete plane trees \cite{Neveu}, which are seen as subsets of the \emph{Ulam-Harris tree}
	\[\mathcal{U}=\bigcup_{k=0}^{\infty} \N^{k}.
\]
Elements of $\mathcal{U}$ are often referred to as individuals, nodes, or vertices, and usually written $u=u_1u_2\ldots u_k$ for some $k\in\Z_+$ which is the \emph{height} of $u$, and that we notate $|u|$. If $k\neq 0$ (i.e. if $u\neq \emptyset$), the \emph{parent} of $u$ is $u^-=u_1u_2\ldots u_{k-1}.$ If also  $v=v_1\ldots v_{\ell}\in\mathcal{U}$, we let $uv=u_1u_2\ldots u_kv_1\ldots v_{\ell}$ be the concatenation of $u$ and $v$. From the definition of parent, we extrapolate natural definitions for children, descendants, ancestors of individuals. 

A tree is a subset $t$ of $\mathcal{U}$ such that:
\begin{itemize}
\item $\emptyset \in {t}$, serving as root,
\item $\forall u \in {t}\setminus\{\emptyset\}, u^-\in {t}$,
\item $\forall u\in{t}, \exists k_u\in\Z_+, \forall j\in\N, uj\in{t} \Leftrightarrow j \leq k_u$ ($k_u$ can be seen as the number of children of $u$, or its degree).
\end{itemize}

We write $\bb{T}$ for the set of trees.

\subsubsection{Multi-type trees}
A $K$-type tree, or simply $K$-tree, is a pair $\b{t}=(t,e)$ where $t$ is a tree and $e$ is a function: $t\to [K]:=\{1,\ldots,K\}$, which gives a type $e(u)$ to every vertex $u$. For a vertex $u\in\mathbf{t}$, we also let $\mathbf{w}(u)=\big(e(u1),\ldots,e(uk_u)\big)$ be the list of types of the ordered offspring of $u$, which we call its \emph{$K$-degree}. Note that this is an element of $\cu{W}_K=\bigcup_{n=0}^\infty[K]^n$, the set of all finite ordered type-lists.  For $\b{w}\in\cu{W}_K$ and $i\in [K]$, we let $d_i(\b{w})$ be the number of times that $i$ features in $\b{w}$.

With a slight abuse of notation, we say that elements of $t$ are also elements of $\b{t}$. Let $\bb{T}_K$ be the set of $K$-type trees.

For $n\in\N$ and $i\in[K],$ we call the $n$-th generation of type $i$ of $\b{t},$ written $G_n^{(i)}(\b{t}),$ its set of type $i$ vertices which themselves have exactly $n$ strict ancestors of type $i$, and write $z_n^{(i)}(\b{t})$ for its cardinality.

\subsubsection{Local convergence of trees}
For $h\in\Z_+$ and $\mathbf{t}\in\bb{T}_K,$ we call $r_h(\b{t})$ the $K$-tree formed by all the individuals of $\b t$ which have at most $h$ strict ancestors with the same type as the root. The distance between two trees $\b{t}$ and $\b{t}'$ is then defined as 
\[\delta(\b{t},\b{t}')=2^{-\sup\{h\in\bb{Z}_+; \ r_h(\b{t})=r_h(\b{t}')\}},\]
where we take the convention that the supremum of $\emptyset$ is $-1.$

In other words, a ball of radius $\veps>0$ is a set of trees which are all identical up until $-\log_2(\veps)$ generations of root-type vertices. Note that this is an ultra-metric, and so all balls are both closed and open.

Having a metric gives us a Borel sigma-field, which is the one generated by the functions $r_h$ for $h\in\Z_+,$ allowing us to define random trees. Because of the discrete structure of the metric, convergence in distribution is easily characterised: a sequence of random $K$-trees $T_n$ converges in distribution to $T$ if and only if, for all $h\in\Z_+$ and all $\b{t}\in\bb{T}_K$,
\begin{equation}
\lim_{n\to\infty}\bb{P}\left[r_h(T_n)=\b{t}\right]=\bb{P}\left[r_h(T)=\b{t}\right].
\end{equation}

\begin{remark} Note that we use a slightly different definition of the local metric than the standard: traditionally, $r_h(\b{t})$ is taken to be the subset of vertices with height at most $h$. Our choice is motivated by ease of use later on, and it should be noted that the resulting topologies are very similar: while our version is slightly sharper than the usual, the only differences in convergence occur for trees which have an infinite $r_h(\b{t})$ for some $h$, which does not happen for the trees (deterministic and random) we will look at.
\end{remark}

\subsection{\BB and Kesten trees}
\subsubsection{$K$-type \BB trees}\label{sec:defBB}

An (ordered $K$-type) \emph{offspring distribution} is any family $\zeta=(\zeta^{(i)})_{i\in[K]}$, where, for all $i\in[K]$, $\zeta^{(i)}$ is a probability distribution on $\cu{W}_K$. A $K$-type \BB tree with offspring distribution $\zeta$ is then the family tree of a population whose individuals produce offspring according to $\zeta$, independently from other individuals of the same generation. In terms of notation, for $i\in[K]$, we write $\pr^{(i)}$ for a probability distribution under which $T$ is a $K$-type \BB tree with ordered offspring distribution $\zeta$, with root of type $i$. It can be characterised by the following: for any fixed $K$-tree $\b{t}$ with at most $n$ generations of type $i$,

\[\bb{P}_\zeta^{(i)}\left[r_n(T)=\b{t}\right]=\mathds{1}_{\{e(\varnothing)=i\}}\prod_{u\in\b{t}\setminus{G_n^{(i)}(\b{t})}}\zeta^{\left(e(u)\right)}\left(\b{w}(u)\right).\]

The \emph{mean matrix} $M=(m_{i,j})_{i,j\in[K]}$ is the $K\times K$ matrix whose $(i,j)$-th entry is the expected number of children of type $j$ among the offspring of an individual of type $i$: \begin{equation}\label{mean matrix}
m_{i,j}=\sum_{\b{w}\in\cu{W}_K}d_j(\b{w})\;\zeta^{(i)}(\b{w}), \qquad \forall i,j\in [K].
\end{equation}

We will usually assume that $\zeta$ is:
\begin{itemize}
    \item \textit{non-singular,} meaning that there is at least one $i$ for which $\zeta^{(i)}\big(\{\b{w}\in\mathcal{W}_K,|\b{w}|\geq 2\}\big)>0,$ avoiding purely linear trees;
    \item \emph{irreducible}, meaning that for all $(i,j)$, there exists $n$ such that the $(i,j)$-th entry of $M^n$ is positive.
\end{itemize}

Under these assumptions, if moreover the entries of $M$ are all finite, by the Perron-Frobenius theorem, the spectral radius $\rho$ of $M$ is itself an eigenvalue with multiplicity 1. We call $\boldsymbol{\alpha}$ and $\boldsymbol{\beta}$ the unique left and right eigenvectors, normalised such that  $\sum_i \alpha_i=\sum_i \alpha_i\beta_i=1$. The vector $\boldsymbol{\beta}$ should be thought of as giving a ``weight'' to vertices corresponding to their types, and this is why we will write $\beta_u$ for $\beta_{e(u)}$ when $u\in T$. Furthering this idea, we define the weight of a type-list $\b{w}\in\mathcal{W}_K$ by $\b{z}_{\b{w}}=\sum_{i=1}^{|\b{w}|}\beta_{w_i}=d(\b{w})\cdot\boldsymbol{\beta}$, where $d(\b{w})=(d_1(\b{w}),\ldots,d_K(\b{w})).$

The tree is then said to be \emph{critical} (resp. \emph{subcritical, supercritical}) if $\rho=1$ (resp. $\rho<1$, $\rho>1$).


\subsubsection{$K$-type Kesten trees} \label{sec:defKesten}
Given an ordered offspring distribution $\zeta$ as above, we can associate to it another random tree that we call the \emph{Kesten tree}. This can be seen as the family tree of a $K$-type population where individuals are further distinguished by being either \emph{normal} or \emph{special}. Normal individuals reproduce using $\zeta$, but special individuals have offspring following the size-biased distribution $\widehat{\zeta}$ given by $\widehat{\zeta}^{(i)}(\b{w})=\frac{1}{\rho}\frac{\b{z}_\b{w}}{\beta_i}\zeta^{(i)}(\b{w})$, for $i\in[K]$. Furthermore, exactly one of the children of each special individual $u$ is chosen to be special - conditionally on the offspring being $\b{w}=(w_1,\ldots,w_k)$, the special child is the $i$-th child with probability $\beta_{ui}/\b{z}_{\b{w}}.$

This gives an almost surely infinite tree with a particular infinite line of descent, often called the \emph{spine,} comprised of its special vertices. We call this tree $\widehat{T}$, still using $\pr^{(i)}$ to refer to the distribution where its root is of type $i$. If $\zeta$ is critical, then the distribution of $\widehat{T}$ is usefully characterised by the following proposition.

\begin{prop}\label{prop: K dist}
    Assume that $T$ is critical. For $\b{t}$ a $K$-tree with root of type $i\in[K]$ and  with $n\in\Z_+$ generations of type $i$, $\ell\in G_n^{(i)}(\b{t})$, the Kesten tree $\widehat{T}$ satisfies the following:

\begin{equation}\label{eq:Kestenrestrictionwithspecial}
\bb{P}^{(i)}\left[r_n(\widehat{T})=\b{t}, \ell \text{ is special}\right]=\,\bb{P}^{(i)}\left[r_n(T)=\b{t}\right].\end{equation}

\begin{equation}\label{eq:Kestenrestriction}
\bb{P}^{(i)}\left[r_n(\widehat{T})=\b{t}\right]=z_n^{(i)}(\b{t})\,\bb{P}^{(i)}\left[r_n(T)=\b{t}\right].\end{equation}

\end{prop}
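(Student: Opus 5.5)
Both identities follow from writing $\bb{P}^{(i)}[r_n(\widehat{T})=\b{t}, \ell \text{ special}]$ as a product over the vertices of $\b{t}$ and watching the size-biasing factors telescope along the path from the root to $\ell$. Identity \eqref{eq:Kestenrestriction} then comes from summing \eqref{eq:Kestenrestrictionwithspecial} over $\ell$.

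\textbf{Step 1: the product formula.} Fix $\b{t}$ and $\ell\in G_n^{(i)}(\b{t})$. The event that $r_n(\widehat{T})=\b{t}$ with $\ell$ special means that the special vertices inside $\b{t}$ are exactly the ancestors of $\ell$ together with $\ell$ itself. Write $\varnothing=u_0,u_1,\ldots,u_m=\ell$ for this ancestral line. By construction of $\widehat{T}$, the probability factorises over the vertices $u\in\b{t}\setminus G_n^{(i)}(\b{t})$. A normal vertex contributes $\zeta^{(e(u))}(\b{w}(u))$. A special vertex $u_k$ with $k<m$ contributes
\[\widehat{\zeta}^{(e(u_k))}(\b{w}(u_k))\cdot\frac{\beta_{u_{k+1}}}{\b{z}_{\b{w}(u_k)}}=\frac{\beta_{u_{k+1}}}{\beta_{u_k}}\,\zeta^{(e(u_k))}(\b{w}(u_k)),\]
using $\rho=1$. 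The factor $\b{z}_{\b{w}(u_k)}$ cancels against the size-biasing.

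We need to check that this accounts for every vertex correctly. No vertex of $G_n^{(i)}(\b{t})$ lies strictly on the path to $\ell$: such a vertex has type $i$ and exactly $n$ strict type-$i$ ancestors, so $\ell$, being a strict descendant of it, would have more than $n$. Hence every $u_k$ with $k<m$ is indeed a reproducing vertex counted in the product. The vertex $\ell$ itself is in $G_n^{(i)}(\b{t})$, so it is a leaf of $\b{t}$ and contributes no factor.

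\textbf{Step 2: telescoping.} The ratios along the path telescope to $\beta_\ell/\beta_\varnothing$. Since both the root and $\ell$ have type $i$, this ratio equals $1$. The remaining product is exactly
\[\mathds{1}_{\{e(\varnothing)=i\}}\prod_{u\in\b{t}\setminus G_n^{(i)}(\b{t})}\zeta^{(e(u))}(\b{w}(u))=\bb{P}^{(i)}[r_n(T)=\b{t}],\]
by the characterisation in Section~\ref{sec:defBB}. This proves \eqref{eq:Kestenrestrictionwithspecial}.

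\textbf{Step 3: summing over $\ell$.} For \eqref{eq:Kestenrestriction}, sum \eqref{eq:Kestenrestrictionwithspecial} over $\ell\in G_n^{(i)}(\b{t})$. The sum gives the right-hand side provided that, on the event $r_n(\widehat{T})=\b{t}$, exactly one vertex of $G_n^{(i)}(\b{t})$ is special almost surely.

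This is the only genuinely non-computational point, and it needs two facts.
\begin{itemize}
\item \emph{At most one such vertex is special.} The special vertices form a single line of descent, and distinct elements of $G_n^{(i)}(\b{t})$ are not ancestors of one another.
\item \emph{At least one such vertex is special.} The spine must exit $r_n$. By irreducibility and Perron--Frobenius, the Markov chain of spine types, with transitions $\widehat{\zeta}$ composed with the special-child choice, has mean transition $m_{jk}\beta_k/\beta_j$. This is an irreducible stochastic matrix on the finite set $[K]$, so the chain is recurrent and visits type $i$ infinitely often. The spine therefore eventually contains a type-$i$ vertex with exactly $n$ strict type-$i$ ancestors, and that vertex lies in $G_n^{(i)}(\b{t})$.
\end{itemize}
With these two facts the events $\{r_n(\widehat{T})=\b{t},\ \ell\text{ special}\}$, for $\ell\in G_n^{(i)}(\b{t})$, partition the event $\{r_n(\widehat{T})=\b{t}\}$ up to a null set. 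Each has probability $\bb{P}^{(i)}[r_n(T)=\b{t}]$, which gives the factor $z_n^{(i)}(\b{t})$.
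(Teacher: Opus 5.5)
Your proof is correct and follows the paper's argument. You factorise over the vertices of $\b{t}$, let the size-biasing factor cancel against the special-child choice, telescope the ratios $\beta_{u_{k+1}}/\beta_{u_k}$ along the ancestral line of $\ell$ to $\beta_i/\beta_i=1$, and then sum over $\ell$. You also justify, via recurrence of the spine-type chain with transition matrix $m_{jk}\beta_k/\beta_j$, that exactly one vertex of $G_n^{(i)}(\b{t})$ is special on $\{r_n(\widehat{T})=\b{t}\}$, which the paper's ``summing over $\ell$'' leaves implicit; and indexing the ancestral line by its true length $m=|\ell|$ rather than by $n$, as the paper does, is the accurate choice.
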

\begin{proof}

We prove \eqref{eq:Kestenrestrictionwithspecial}. Note that \eqref{eq:Kestenrestriction} then follows by summing over $\ell$ in   $G_n^{(i)}(\b{t})$.

Recall the description of $\widehat{T}$ at the start of this section. By independence of individuals within a generation, we can write the probability in \eqref{eq:Kestenrestrictionwithspecial} as the product of the probabilities of each node having the correct $K$-degree, and the probabilities of choosing the correct special child of each special node. Writing $(u_0=\emptyset,u_1,\ldots,u_n=\ell)$ for the ancestral line of $\ell$ in $\b{t},$ we then have

\begin{align*}
    \pr^{(i)}\left[r_n(\widehat{T})=\b{t}, \ell \text{ is special}\right]&=\prod_{\substack{u\in\b{t}\setminus G_n^{(i)}(\b{t}) \\ u\neq u_0,\ldots,u_{n-1}}}\zeta^{\left(e(u)\right)}\left(\b{w}(u)\right)\prod_{k=0}^{n-1}\;\widehat{\zeta}^{\left(e(u)\right)}\left(\b{w}(u)\right)\frac{\beta_{u_{k+1}}}{z_{\b{w}(u_k)}\;} \\
    &=\prod_{\substack{u\in\b{t}\setminus G_n^{(i)}(\b{t}) \\ u\neq u_0,\ldots,u_{n-1}}}\zeta^{\left(e(u)\right)}\left(\b{w}(u)\right)\prod_{k=0}^{n-1}\;\frac{z_{\b{w}(u_k)}}{\beta_{u_k}}\zeta^{\left(e(u)\right)}\left(\b{w}(u)\right)\frac{\beta_{u_{k+1}}}{z_{\b{w}(u_k)}\;} \\
    &=\prod_{\substack{u\in\b{t}\setminus G_n^{(i)}(\b{t}) \\ u\neq u_0,\ldots,u_{n-1}}}\zeta^{\left(e(u)\right)}\left(\b{w}(u)\right)\prod_{k=0}^{n-1}\;\frac{\beta_{u_{k+1}}}{\beta_{u_k}}\zeta^{\left(e(u)\right)}\left(\b{w}(u)\right)\; \\
    &=\frac{\beta_{u_n}}{\beta_{u_0}} \prod_{\substack{u\in\b{t}\setminus G_n^{(i)}(\b{t}) \\ u\neq u_0,\ldots,u_{n-1}}}\zeta^{\left(e(u)\right)}\left(\b{w}(u)\right)\prod_{k=0}^{n-1}\;\zeta^{\left(e(u)\right)}\left(\b{w}(u)\right) \\
    &=\frac{\beta_{i}}{\beta_{i}}  \prod_{\substack{u\in\b{t}\setminus G_n^{(i)}(\b{t})}}\zeta^{\left(e(u)\right)}\left(\b{w}(u)\right)\\
    &= \bb{P}^{(i)}\left[r_n(T)=\b{t}\right].
\end{align*}

\end{proof}

While our interest in $\widehat{T}$ is its appearance in limits, it is worth noting that its distribution is directly obtained from that of $T$ via algebraic means.

\section{Skeletons and blocks: definitions and general properties} \label{skeletons blocks}
\subsection{Skeleton}
Given a $K$-type tree $\bf{t}$, we call its \emph{skeleton} the monotype tree obtained by removing all vertices which are not of the same type as the root, and keeping the ancestral relations. We give a formal definition taken from \cite{addarioberry2025scalinglimitsmultitypebienayme} (where it is called the ``reduced tree''). First, for all $u\in\b{t}$ of root type, let $A_u$ be the set of root-type vertices whose most recent root-type ancestor is $u$, and we index its elements in the lexicographical order by $c_1(u),\ldots,c_{|A_u|}(u).$ The skeleton $\cu{S}(\b{t})\subset{\mathcal{U}}$ is then defined inductively, along with a map $\pi:S(\b{t})\to \mathbf{t},$ as follows:
\begin{itemize}
    \item $\emptyset\in \cu{S}(\b{t}),$ the type of $\emptyset$ in $\cu{S}(\b{t})$ is the same as its type in $\b{t}$, and $\pi(\emptyset)=\emptyset,$
    \item for all $u\in \cu{S}(\b{t}),$ and $k\geq 1,$ $uk\in \cu{S}(\b{t})$ if and only if $1\leq k \leq |A_u|$; for all such $u$ and $k$ we set $\pi(uk)=c_k(\pi(u)).$
\end{itemize}

Notice that the map $\pi$ bijectively places the vertices of $\cu{S}(\b{t})$ as the root type vertices of $\b{t}.$
Since, for all $n\in\Z_+$, the ball centred at $\cu{S}(t)$ and of radius $n$ in $\bb{T}$ is entirely determined by $r_n(\cu{S}(\b{t})),$ we deduce that $\cu{S}$ is a continuous function from $\bb{T}_K$ to $\bb{T}$.

\subsection{Blocks}
In general, a \emph{block} is a $K$-type tree such that all non-root vertices have no other strict ancestor of root type - otherwise said, vertices of root type are either the root itself or a leaf. Root type leaves of a block are called \emph{hooks}. We will use the notation $\bf{b}$ for blocks, and, for convenience, we also let the empty set be a block. The set of blocks is written $\bb{B}_K$. Note that, on $\bb{B}_K$, the metric $\delta$ of $\bb{T}_K$ is entirely discrete: $\delta(\b{b},\b{b'})$ is equal to $2$ if the roots of the blocks $\b b$ and $\b{b}'$ have different types, $1$ if their roots have the same types but the blocks are not identical, and 0 if the blocks are identical.

For $\b{t}\in\bb{T}_K$ and $u\in\cu{S}(\b{t}),$ we let $B_u(\b{t})$, called the \emph{block of $u$}, be the set of all $v\in \mathcal{U}$ such that the most recent strict ancestor of type 1 of $\pi(u)v$ is $\pi(u)$. For convenience, if $u\not\in\cu{S}(\b{t}),$ we let $B_u(\b{t})=\emptyset$.

Notice that, for all $u\in\mathcal{U},$ $B_u(\b{t})$ is entirely determined by $r_{|u|+1}(\b{t}),$ making $B_u$ a continuous function.

Note that every member of $\cu{S}(\b{t})$ is the root of its own block, and, excluding the root, each is also a hook of another block.

\subsection{Two points of view on the structure of $K$-trees}\label{sec:pointsofview}
We can view $K$-trees in two different ways.

\paragraph{The ``decorated skeleton'' point of view.}

Each $K$-type tree can be seen as a skeleton ``base'', where at each skeleton vertex we insert a block, blocks being connected by identifying roots and hooks. This description is actually a bicontinuous bijection.

To be more precise, we define a function $\Phi$ from $\bb{T}\times \bb{B}_K^{\mathcal{U}}$ to $\bb{T}_k$ inductively as follows. Specifically, we build the tree $\b{t}=\Phi\big(s,(\b{b}_u,u\in\mathcal{U})\big)$ which has which has $s$ as a skeleton and $(\b{b}_u,u\in\mathcal{U})$ as blocks, as well as the function $\pi$ which injects $s$ into $\b{t},$ inductively:

\begin{itemize}
    \item $\emptyset\in \b t,$ $e(\emptyset)$ is the type of $\emptyset$ in $\b{b}_{\emptyset},$ and $\pi(\emptyset)=\emptyset,$
    \item for all $u\in \b t$ and $v\in \cu{S}(\b{t})$ such that $\pi(v)=u,$ then all vertices of the form $ux$ with $x\in \b{b}_v$ are in $\b t$, with the type of $ux$ being the type of $x$ in $\b{b}_v$. If $x$ is the $k$-th hook of $\b{b}_v$ in lexicographical order, then we let $\pi(vk)=ux.$
\end{itemize}

The function $\Phi$ clearly rebuilds a tree from its skeleton and blocks, but is not quite a bijection between $\bb{T}_K$ and $\bb{T}\times \bb{B}_K^{\mathcal{U}}$ for two reasons: $\b{b}_u$ does not matter for $u\not\in s,$ and the construction does not work if, for some $u$ in $s$, the number of hooks of $\b{b}_u$ does not match the degree of $u$. Hence we define
\[\mathcal{E}=\left\{\left(s,(\b{b}_u,u\in\mathcal{U})\right)\in \bb{T}\times \bb{B}_K^{\mathcal{U}}: \forall u\not\in s, \b{b}_u=\emptyset,\forall u\in s, z_1(\b{b}_u)=k_u(s). \right\}.\]

\begin{theorem}\label{th:generalstructure}
    The map $\Phi$
    is a homeomorphism from $\mathcal{E}$ to $\bb{T}_K$.
\end{theorem}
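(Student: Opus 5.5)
The plan is to exhibit an explicit inverse, namely $\Psi:\b{t}\mapsto\big(\cu{S}(\b{t}),(B_u(\b{t}),u\in\mathcal{U})\big)$, and to prove continuity of both $\Phi$ and $\Psi$ using the fact that every map involved is ``local'' in the generation index. The topology on $\bb{T}\times\bb{B}_K^{\mathcal{U}}$ is the product topology, in which each $\bb{B}_K$ carries the discrete metric. First we check that $\Psi$ maps into $\mathcal{E}$. By definition, $B_u(\b{t})=\emptyset$ whenever $u\notin\cu{S}(\b{t})$. For $u\in\cu{S}(\b{t})$, the hooks of $B_u(\b{t})$ are exactly the $v$ such that $\pi(u)v$ is of root type and has $\pi(u)$ as its most recent strict root-type ancestor. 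These are exactly the elements of $A_{\pi(u)}$, so the number of hooks equals $|A_{\pi(u)}|=k_u(\cu{S}(\b{t}))$. One small point needs checking here: each $B_u(\b{t})$ really is a block. It contains $\emptyset$ and is prefix-closed, because an ancestor $\pi(u)v'$ of $\pi(u)v$ lying strictly below $\pi(u)$ cannot be of root type. Its sibling-closure is inherited from $\b{t}$.

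Next we show $\Phi\circ\Psi=\mathrm{id}_{\bb{T}_K}$ and $\Psi\circ\Phi=\mathrm{id}_{\mathcal{E}}$. Both identities go by induction on the number of root-type ancestors, that is, on the height in the skeleton. For the first, we prove by induction on $|v|$ that the map $\pi$ built in $\Phi$ agrees with the skeleton map $\pi$ of $\b{t}$, and that the vertices inserted at stage $v$ are exactly $\pi(v)B_v(\b{t})$. The lexicographic ordering of hooks matches the ordering $c_k$ of $A_{\pi(v)}$, because concatenation with the fixed prefix $\pi(v)$ preserves lexicographic order. Since every vertex of $\b{t}$ lies in the block of its most recent root-type ancestor (or of itself if it has root type, as the block root), the union of these sets is all of $\b{t}$. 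For the second identity, take $(s,(\b{b}_u))\in\mathcal{E}$ and set $\b{t}=\Phi(s,(\b{b}_u))$. In $\b{t}$, the root-type vertices are exactly the $\pi(v)$ for $v\in s$, because in a block the root-type vertices are only the root and the hooks. Hence the most recent strict root-type ancestor of $\pi(v)x$, for $x\in\b{b}_v\setminus\{\emptyset\}$, is $\pi(v)$. It follows that $\cu{S}(\b{t})=s$ and $B_v(\b{t})=\b{b}_v$ for $v\in s$, and the condition $\b{b}_u=\emptyset$ off $s$ in $\mathcal{E}$ handles all remaining $u$.

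Finally, for continuity we use the key observation, already noted in the text, that $r_h(\cu{S}(\b{t}))$ and the family $\{B_u(\b{t}):|u|\le h-1\}$ are functions of $r_h(\b{t})$. This gives continuity of $\Psi$ coordinatewise, which suffices in the product topology. Conversely, unwinding the induction defining $\Phi$, $r_h(\Phi(s,(\b{b}_u)))$ depends only on $r_h(s)$ and on the finitely many $\b{b}_u$ with $u\in r_{h-1}(s)$. These are finitely many coordinates as long as $r_{h-1}(s)$ is finite, which holds for every $s\in\bb{T}$ since degrees are finite. We would show by induction on $h$ that $r_h(\Phi(\cdot))$ is the union, over $v\in s$ with $|v|\le h-1$, of the sets $\pi(v)\b{b}_v$, together with the points $\pi(v)$ for $|v|=h$. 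The expected main obstacle is only bookkeeping: the neighbourhood of $(s,(\b{b}_u))$ on which $r_h(\Phi)$ is constant must be a basic product-open set. We take the set where $r_h$ of the skeleton agrees with $r_h(s)$ and the blocks agree on the finite index set $r_{h-1}(s)$; this is open because $\delta$ is an ultrametric and each $\bb{B}_K$ is discrete.
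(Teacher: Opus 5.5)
Your proposal is correct and takes the same route as the paper. The paper's proof only asserts that $\Phi$ is bijective by construction and that $r_h(\b{t})$ carries the same information as $r_h(s)$ together with finitely many blocks; you supply the inverse map, both compositions and the cylinder-set bookkeeping, and your index range $|u|\le h-1$ is the accurate one (the paper's $|u|\le n$ is harmlessly off by one, since $\b{b}_u$ with $|u|=h$ is not visible in $r_h(\b{t})$).

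Like the paper, you tacitly use two conditions that $\mathcal{E}$ and $\bb{T}_K$ as written do not impose. First, every $\b{b}_u$ with $u\in s$ must share the root type of $\b{b}_\emptyset$; this is needed for $\Phi$ to be well defined and for your claim that the root-type vertices of $\Phi(s,(\b{b}_u))$ are exactly the $\pi(v)$. Second, each $A_u$ must be finite, which is needed for $\cu{S}(\b{t})\in\bb{T}$.
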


Note that $\mathcal{E}$ is equipped with the product topology, which itself is inherited from the product metric on $\bb{T}\times \bb{B}_K^{\mathcal{U}}$, defined as follows, where $(u_n,n\in \N)$ is an enumeration of $\mathcal{U}:$ \[\displaystyle d\bigl(\left(\mathbf{t},(\b{b}_u,u\in \mathcal{U})\right),\left(\mathbf{t}',(\b{b}'_u,u\in \mathcal{U})\right)\bigr)=\delta(\b{t},\b{t}')+\sum_{n\in\N} 2^{-n} \delta(\b{b}_{u_n},\b{b}'_{u_n}).\] Note that convergence for $d$ means convergence of every coordinate.

\begin{proof}
    It is clear from the construction that $\Phi$ bijectively maps a tree to its skeleton and blocks. More precisely, the knowledge of $r_n(\b{t})$ for $n\in\Z_+$ is equivalent to the knowledge of $r_n(s)$ and $(\b{b}_u,|u|\leq n),$ and this is exactly bicontinuity.
\end{proof}

This has useful consequences for probability and local convergence in distribution: a sequence of $K$-trees converges in distribution if and only if its skeleton converges in distribution, and the conditional distribution of the blocks given the skeleton converges in distribution.
\begin{corollary}\label{cor:generalconvergence}
Let $T$ and $T_n$, for $n\in\N,$ be random $K$-trees. 

\begin{itemize}
    \item  Let $\b{t}=\Phi\big(s,(\b{b}_u,u\in\mathcal{U})\big)$ a fixed $K$-tree with $h$ generations of root type .We have
    \[\pr[r_h(T)=\b{t}]=\pr[r_h(\cu{S}(T))=s]\;\pr\big[\,\forall u \in s, B_u(T)=\b{b}_u\mid r_h(\cu{S}(T))=s\big]\]
    \item The sequence $(T_n,n\in\N)$ converges in distribution to $T$ if and only if, for all $\b{t}=\Phi\big(s,(\b{b}_u,u\in\mathcal{U})\big)$, calling $h$ the number of root-type generations of $\b t$, we have 
    \[\underset{n\to\infty}\lim \pr[r_h(\cu{S}(T_n))=s] = \pr[r_h(\cu{S}(T))=s]\]
    and
    \[\underset{n\to\infty}\lim \pr[\,\forall u \in s, B_u(T_n)=\b{b}_u \mid r_h(\cu{S}(T_n))=s] = \pr[\,\forall u \in s, B_u(T)=\b{b}_u\mid r_h(\cu{S}(T))=s].\]
\end{itemize}
\end{corollary}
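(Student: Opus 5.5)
The first bullet is the definition of conditional probability once we identify events. By Theorem \ref{th:generalstructure}, $\b{t}=\Phi\big(s,(\b{b}_u)\big)$ is determined by its skeleton and blocks, and $r_h(\b{t})$ corresponds to $\big(r_h(s),(\b{b}_u,|u|\le h)\big)$. Since $\b{t}$ has $h$ root-type generations, $s=r_h(s)$ has height $h$, and the relevant blocks are exactly $\b{b}_u$ for $u\in s$. Blocks $\b{b}_u$ with $u\in s$, $|u|=h$, are trivial: they reduce to the root, since those vertices are leaves of $\b{t}$ at which the restriction cuts. More precisely, $B_u(r_h(\b{t}))$ for $|u|=h$ is the single root vertex. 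So I will adopt the convention that the condition on those blocks is read on $r_h(T)$, or equivalently that only $u\in s$ with $|u|<h$ matter.

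The identity to establish is the event equality
\[\{r_h(T)=\b{t}\}=\{r_h(\cu{S}(T))=s\}\cap\{\forall u\in s,\ B_u(T)=\b{b}_u\},\]
with the block condition interpreted at levels $|u|<h$ as above. The inclusion ``$\subseteq$'' holds because skeleton and blocks up to level $h$ are read off $r_h$ (continuity of $\cu{S}$ and of $B_u$, which depends on $r_{|u|+1}$). The inclusion ``$\supseteq$'' holds because $\Phi$ reconstructs $r_h$ from $r_h(s)$ and these blocks. Multiplying $\pr[r_h(\cu{S}(T))=s]$ by the conditional probability then gives the first bullet. If the skeleton probability is zero, both sides vanish, with any convention for the conditional probability.

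For the second bullet, I use the characterisation of convergence in distribution given by pointwise convergence of $\pr[r_h(T_n)=\b{t}]$ for all $h$ and $\b{t}$.

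\emph{``If'' direction.} Apply the first bullet to $T_n$ and $T$ and take the limit of the product of the two convergent factors.

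\emph{``Only if'' direction.} Convergence of skeleton probabilities follows from summing $\pr[r_h(T_n)=\b{t}']$ over the countably many $\b{t}'$ with skeleton restriction $s$. Equivalently, $\cu{S}$ is continuous and $\{r_h(\cu{S})=s\}$ is clopen, so the continuous mapping theorem applies. For the conditional probabilities, when $\pr[r_h(\cu{S}(T))=s]>0$, divide the converging joint probability (the first bullet) by the converging skeleton probability. When it equals $0$, the joint probability for $T$ is also $0$ and the condition must be understood with the usual convention; this is the only delicate point.

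So the main obstacle is bookkeeping rather than substance. The two points needing care are the boundary blocks at height $h$ and the degenerate case of zero skeleton probability, where one should restrict to $s$ charged by the limit or interpret the statement accordingly.
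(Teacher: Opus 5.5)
Your proposal is correct and follows the paper's route. The paper gives the corollary as an immediate consequence of the homeomorphism $\Phi$ of Theorem~\ref{th:generalstructure} (knowing $r_h$ of the tree amounts to knowing $r_h$ of the skeleton plus the relevant blocks), combined with conditional probability and the characterisation of convergence through the probabilities $\pr[r_h(\cdot)=\b{t}]$. The caveats you raise are genuine imprecisions the paper passes over: the block condition should only bear on $u\in s$ with $|u|<h$ (as in the later products over $s\setminus G_h(s)$), and the ``only if'' claim for the conditional probabilities needs $\pr[r_h(\cu{S}(T))=s]>0$. For the skeleton convergence, rely on your continuous-mapping/clopen argument, since passing the limit through the countable sum would need an extra Scheff\'e-type step.
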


\paragraph{The ``pile of blocks'' point of view.}
A $K$-tree can also be obtained without a predetermined skeleton, simply by taking a set of blocks and stacking them wherever possible, connecting hooks and roots. To be more precise, let $(\b{b}_u,u\in\mathcal{U})$ be a collection of blocks, we build from it a $K$-tree $\b{t}=\Psi(\b{b}_u,u\in\mathcal{U})$. First, we build the skeleton $s$ by saying that $\emptyset\in s$, and that the degree of $u\in s,$ is equal to the number of hooks of $\b{b}_u.$

We then define $\Psi(\b{b}_u,u\in\mathcal{U})=\Phi\big(\big(s,(\b{b}_u,u\in\mathcal{U})\big)\big).$

\begin{prop}\label{prop:pilestructure}
    The map $\Psi$ is a continuous surjection from $(\bb{B}_K)^{\mathcal{U}}$ to $\bb{T}_K$.
\end{prop}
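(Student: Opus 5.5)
The plan is to factor $\Psi$ through the map $\Phi$ of Theorem~\ref{th:generalstructure}. We write
\[\Psi(\b{b}_u,u\in\mathcal{U})=\Phi\big(\Sigma(\b{b}_u,u\in\mathcal{U})\big),\]
where $\Sigma:(\bb{B}_K)^{\mathcal{U}}\to\mathcal{E}$ sends $(\b{b}_u)_u$ to $(s,(\b{b}'_u)_u)$. Here $s$ is the skeleton built from the hook counts, and $\b{b}'_u=\b{b}_u$ for $u\in s$, $\b{b}'_u=\emptyset$ otherwise. Surjectivity and continuity of $\Psi$ then reduce to the corresponding properties of $\Sigma$.

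\textbf{Step 1: $s$ is a tree and $\Sigma$ lands in $\mathcal{E}$.} We define $s$ inductively: $\emptyset\in s$, and for $u\in s$ we let $uk\in s$ if and only if $1\le k\le z_1(\b{b}_u)$, the number of hooks of $\b{b}_u$. This number is finite, since a block is a finite-degree tree and its hooks are root-type leaves. The sets satisfy the three axioms of a tree by construction, and $k_u(s)=z_1(\b{b}_u)$ for $u\in s$. Hence $\Sigma$ takes values in $\mathcal{E}$, and $\Psi=\Phi\circ\Sigma$ is well defined.

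\textbf{Step 2: surjectivity.} Given $\b{t}\in\bb{T}_K$, take $\b{b}_u=B_u(\b{t})$ for all $u\in\mathcal{U}$, with the convention $B_u(\b{t})=\emptyset$ for $u\notin\cu{S}(\b{t})$. By the construction of the skeleton, the number of hooks of $B_u(\b{t})$ is $|A_{\pi(u)}|=k_u(\cu{S}(\b{t}))$. Therefore the $s$ produced in Step 1 coincides with $\cu{S}(\b{t})$, and $\Sigma$ returns exactly $(\cu{S}(\b{t}),(B_u(\b{t}))_u)$. This is $\Phi^{-1}(\b{t})$ by Theorem~\ref{th:generalstructure}, so $\Psi(\b{b})=\b{t}$.

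\textbf{Step 3: continuity.} This is the only step that needs care, since the product topology only controls finitely many coordinates at a time. Suppose $(\b{b}^n_u)_u\to(\b{b}_u)_u$ coordinatewise. On $\bb{B}_K$ the metric is discrete, so for any finite set $F\subset\mathcal{U}$ we have $\b{b}^n_u=\b{b}_u$ for all $u\in F$ once $n$ is large.

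Fix $h$. Then $r_h(s)$ depends only on the hook counts of $\b{b}_u$ for $|u|<h$ with $u\in s$. By induction on height this is a finite set of vertices, because each degree is finite. Take $F$ to be the set of vertices of $r_h(s)$. For $n$ large, the blocks $\b{b}^n_u$, $u\in F$, coincide with $\b{b}_u$. The inductive construction then gives $r_h(s^n)=r_h(s)$, and $\b{b}'^n_u=\b{b}'_u$ for all $|u|\le h$, since both vanish outside $r_h(s)$.

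Hence $\Sigma$ is continuous, because each coordinate eventually stabilises and $\delta(s^n,s)\le 2^{-h}$. Composing with the continuous $\Phi$ from Theorem~\ref{th:generalstructure}, $\Psi$ is continuous. Equivalently, one can say directly that $r_h(\Psi(\b{b}))$ is determined by the finitely many blocks $(\b{b}_u, u\in r_h(s))$, so the preimage of each ball is open.
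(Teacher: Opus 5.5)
Your argument is essentially the paper's. The paper only observes that $r_n(\Psi(\b{b}_u,u\in\mathcal{U}))$ is determined by the blocks $(\b{b}_u,|u|\le n)$, and surjectivity comes from $\Phi$ being a bijection on $\mathcal{E}$, exactly as in your Step 2. Your Step 3 is the more careful version. The set $\{u:|u|\le n\}$ is infinite, so dependence on it alone does not give continuity in the product topology. What is needed is your observation that only the finitely many blocks indexed by $r_h(s)$ (indeed $r_{h-1}(s)$) matter, and that this finite set is itself determined by those same blocks.

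One justification is wrong, though: finiteness of $z_1(\b{b}_u)$ does not follow from blocks having finite degrees. A block is an element of $\bb{T}_K$, so it is only locally finite, and it may be infinite with hooks at unbounded depth. For example, take a type-$1$ root followed by an infinite line of type-$2$ vertices, each of which also carries a type-$1$ leaf child. If a block in use has infinitely many hooks, then $s$ is not a tree and $\Sigma$ does not land in $\mathcal{E}$. This is not cosmetic, since finiteness of $r_h(s)$ is exactly what your continuity argument rests on. The statement therefore tacitly restricts to blocks with finitely many hooks. The paper leaves this implicit too (cf.\ its remark that infinite $r_h(\b{t})$ do not occur for the trees it considers), as it does the convention that all blocks in use share the root type. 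You should state this as a standing assumption rather than derive it; with it, the rest of your proof goes through as written.
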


The proof is similar to earlier: just notice that $r_n(\Psi(\b{b}_u,u\in\mathcal{U}))$ is fully determined by $(\b{b}_u,|u|\leq n\big).$ Notice that this is not a bijection since blocks $\b{b}_u$ for $u\not\in s$ aren't used in this construction.

\section{Applications to \BB and Kesten trees}\label{structure}
\subsection{The S\&B structure of \BB and Kesten trees}
In this section and the next, we restrict ourselves to random trees with root of type 1, and omit the $(1)$ exponent in the notation. Note that there is no loss of generality. We take an ordered offspring distribution $\zeta$, and let $T$ be the corresponding \BB tree. The following results about its skeleton are found in \cite{M08}.

\begin{prop}\label{prop: Skel BB}
    The skeleton $\cu{S}(T)$ is a monotype \BB tree, and is critical/subcritical/supercritical if and only if $T$ is. If $\zeta$ admits finite $p$-th moments with $p\in\N$ (resp. admits some exponential moments), then so does the offspring distribution of $\cu{S}(T).$

    If $T$ is critical and has finite second moments, then the variance of the offspring distribution of $\cu{S}(T)$ is equal to
    \[\frac{\sigma^2}{\alpha_1\beta_1^2},\]
    where the number $\sigma> 0$ is defined by $\sigma^2=\sum_{i,j,k} \alpha_i\beta_jb_kQ^{(i)}_{j,k}$, with \\ $Q^{(i)}_{j,j}=\sum_{\mathbf{w}\in\cu{W}_K} \zeta^{(i)}(\mathbf{w}) d_j(\b{w})(d_j(\b{w})-1)$ and $Q^{(i)}_{j,k}=\sum_{\mathbf{w}\in\cu{W}_K} \zeta^{(i)}(\mathbf{w}) d_j(\b{w})d_k(\b{w})$ for $j\neq k$.
\end{prop}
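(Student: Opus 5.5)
We prove this through the ``pile of blocks'' point of view. Write the blocks of $T$ as $(B_u(T),u\in\mathcal{U})$, with the convention $B_u(T)=\emptyset$ for $u\notin\cu{S}(T)$. The main claim is that $T$ has the same law as $\Psi(\b{B}_u,u\in\mathcal{U})$, where the $\b{B}_u$ are i.i.d.\ copies of a single random block $\b{B}$. This block $\b{B}$ is the \BB tree $T$ (root of type $1$) stopped at its first root-type descendants: vertices of type $1$ other than the root are made into leaves, i.e.\ hooks. Once this is established, the remaining statements follow quickly.

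\textbf{Step 1: the block representation.} Fix a $K$-tree $\b{t}=\Phi(s,(\b{b}_u))$ with $h$ generations of type $1$. We compute $\pr[r_h(T)=\b{t}]$ from the product formula of Section~\ref{sec:defBB}. Group the vertices $v\in\b{t}\setminus G_h^{(1)}(\b{t})$ according to the unique $u\in s$ with $|u|<h$ whose block contains $v$ as a non-hook vertex. The product then factorises as
\[\prod_{u\in s,\,|u|<h}\;\prod_{x\in\b{b}_u\setminus\{\text{hooks}\}}\zeta^{(e(x))}(\b{w}(x)).\]
Each inner product is exactly $\pr[\b{B}=\b{b}_u]$. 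This is because, with the hooks frozen, $\b{B}$ is itself a (possibly infinite, but a.s.\ finite in the critical irreducible case) \BB-type object, and its law is given by the same product formula. Summing over the blocks at height $h$ is free, since they are unconstrained. Hence
\[\pr[r_h(T)=\b{t}]=\prod_{u\in s,\,|u|<h}\pr[\b{B}=\b{b}_u],\]
which is the law of the restriction of $\Psi$ applied to i.i.d.\ blocks. Two consequences follow. First, the skeleton is a monotype \BB tree whose offspring law is that of the number of hooks $z_1(\b{B})$, which recovers the first sentence of the proposition. Second, conditionally on $\cu{S}(T)=s$, the blocks are independent, and each $\b{B}_u$ has the law of $\b{B}$ conditioned on having $k_u(s)$ hooks.

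\textbf{Step 2: mean, criticality, moments.} Let $N$ be the number of hooks of $\b{B}$. Consider the type-$1$-killed process: individuals of types $2,\dots,K$ reproduce normally, and type-$1$ children are absorbed. Let $\tilde M$ be the submatrix of $M$ indexed by $\{2,\dots,K\}$. Then
\[\E[N]=m_{1,1}+\sum_{j,k\geq2}m_{1,j}\big((I-\tilde M)^{-1}\big)_{j,k}m_{k,1}.\]
Here $\rho(\tilde M)<\rho(M)$ by irreducibility and Perron--Frobenius applied to a strict principal submatrix, at least when $\rho\le1$. To compare with $1$, restrict the eigen-equation $M\boldsymbol\beta=\rho\boldsymbol\beta$ to the coordinates $j\geq2$ and solve for $\tilde{\boldsymbol\beta}$. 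This gives
\[\E[N]\,\beta_1=\beta_1\quad\text{when }\rho=1,\]
and more generally $\E[N]-1$ has the same sign as $\rho-1$, via a monotonicity argument in $\rho$ (the supercritical case, where $\E[N]$ may be infinite, is handled by the same sign argument). For the moments, we use generating functions: the generating function of $N$ solves a fixed-point system involving those of $\zeta^{(j)}$ for $j\geq2$. Finite $p$-th moments, and analyticity beyond $1$ in the exponential case, propagate because the killed process is subcritical, so its total progeny has exponential tails.

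\textbf{Step 3: the variance.} This is the main obstacle. I would differentiate the fixed-point system twice at $1$ and solve the resulting linear system with $(I-\tilde M)^{-1}$. I would then rewrite the quadratic terms using $\boldsymbol\alpha$ and $\boldsymbol\beta$ to identify $\sigma^2/(\alpha_1\beta_1^2)$. A cleaner route, which I would try first, is to count the expected number of pairs of distinct hooks by summing over their last common ancestor $v$ in $\b{B}$. Each branching point $v$ of type $i$ with two children of types $j,k$ contributes $Q^{(i)}_{j,k}$ times the probabilities of each child reaching a hook, and these probabilities are $\beta_j/\beta_1$ and $\beta_k/\beta_1$ by Step 2's eigenvector identity. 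The expected number of type-$i$ vertices in $\b{B}$ is $\alpha_i/\alpha_1$, as a standard renewal/occupation-measure identity for the left eigenvector. Combining these gives
\[\E[N(N-1)]=\frac{1}{\alpha_1\beta_1^2}\sum_{i,j,k}\alpha_i\beta_j\beta_kQ^{(i)}_{j,k}.\]
Since $\E[N]=1$, the variance equals $\E[N(N-1)]$, which yields the stated formula.
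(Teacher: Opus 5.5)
Your Steps 1 and 3 are sound. They rest on the same ingredients the paper relies on; the paper does not prove this statement itself but cites \cite{M08}. Step 1 is exactly the factorisation of Proposition~\ref{prop:S&B BB}. In Step 3, the count over last common ancestors, combined with $\E[\#_i B]=\alpha_i/\alpha_1$ from Lemma~\ref{lemma:sizeblock} (the root alone when $i=1$) and $h_j=\beta_j/\beta_1$, correctly gives $\E[N(N-1)]=\sigma^2/(\alpha_1\beta_1^2)$. A few points need fixing:
\begin{itemize}
\item $h_j$ is the \emph{expected number} of hooks below a type-$j$ vertex ($h_1=1$), not a probability; it can exceed $1$.
\item The product form uses the conditional independence of the sub-blocks below two distinct children, which you should state.
\item Your ``same sign argument'' for $\rho>1$ only covers the case $\rho(\tilde M)<1$. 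If $\rho(\tilde M)\ge 1$, you must show directly that $\sum_n m_{1,\cdot}\tilde M^n m_{\cdot,1}=\infty$, which follows from irreducibility of $M$.
\item Positivity of $\sigma$ comes from non-singularity.
\end{itemize}

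The genuine gap is in the moment part. You justify it by ``the killed process is subcritical, so its total progeny has exponential tails'', but subcriticality gives nothing of the sort. That progeny is at least the number of non-root-type children of the root, which has no exponential moments if $\zeta^{(1)}$ has polynomial tails. Exponential tails of the progeny come from exponential moments of $\zeta$, not from subcriticality. So your sentence covers the exponential case at best, and the $p$-th moment claim is left unproved. Differentiating your fixed-point system $p$ times only gives a linear system for the $p$-th moments once you already know they are finite.

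A working argument is an induction on $p$ with truncation:
\begin{itemize}
\item Let $N^{(n)}_j$ count the hooks below a type-$j$ vertex within $n$ generations, and set $x_n=(\E[(N^{(n)}_j)^p])_{j\ge 2}$.
\item Expand the $p$-th power of the sum over children. The terms where one child carries the whole power give $m_{j,1}+(\tilde M x_{n-1})_j$.
\item Every other term is bounded by a constant, times the $p$-th moment of $|\b{w}|$ under $\zeta^{(j)}$, times moments of order $<p$ of the $N_k$, which are finite by induction.
\item Hence $x_n\le c+\tilde M x_{n-1}$, so $x_n\le (I-\tilde M)^{-1}c$, and monotone convergence gives $\E[N_j^p]<\infty$.
\item The same expansion at the root then gives $\E[N^p]<\infty$.
\end{itemize}
This argument, like the a.s.\ finiteness of $N$, genuinely needs $\rho(\tilde M)<1$. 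That is automatic for $\rho\le 1$ but can fail for $\rho>1$, where $N=\infty$ with positive probability. So the moment part of the statement should be restricted accordingly.
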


We let $B$ be a reference random variable with the same distribution as $r_1(T)=B_{\emptyset}(T)$:
\[\pr\left[B=\bf{b}\right]=\pr\left[r_1(T)=\b{b}\right]=\underset{x\in\b{b}\setminus \{G_1(\b{t})\}} \prod\zeta^{\left(e(x)\right)}\bigl(\b{w}(x)\bigr),\]
and also let, for $p\in\Z_+$ for which this is defined, $B^p$ be a version conditioned on having $p$ hooks:
\[\pr\left[B^p=\bf{b}\right]=\pr\left[r_1(T)=\b{b}\mid z_1(T)=p\right]=\frac{\pr\left[B=\bf{b}\right]}{\pr\left[z_1(B)=p\right]}.\]

The expected size of this block is known, a proof of this can be found as part of the proof of Proposition 4, (ii) in ~\cite{M08}:
\begin{lemma}\label{lemma:sizeblock}
For $i\neq 1$,
\[\E[\#_i B]=\frac{\alpha_i}{\alpha_1}\]
\end{lemma}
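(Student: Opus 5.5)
We need to show that for $i\neq 1$, $\E[\#_i B]=\alpha_i/\alpha_1$, where $\#_i B$ counts the type-$i$ vertices of the root block (type-1 root, type-1 hooks, interior vertices of types $\neq 1$). The plan is a first-moment computation using the mean matrix with type $1$ made absorbing. This is the ``taboo'' decomposition of $M$.

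\textbf{Step 1: a linear system for the block counts.} Let $\tilde M$ be the $(K-1)\times(K-1)$ submatrix of $M$ indexed by types $\{2,\ldots,K\}$. For $j\neq 1$, write $m_{1,j}$ for the mean number of type-$j$ children of the root. Every non-root vertex of $B$ of type $i\neq 1$ is reached by a path root $\to$ $v_1\to\cdots\to v_k$ in which all $v$'s have types in $\{2,\ldots,K\}$. Summing expectations generation by generation, where only non-type-1 vertices reproduce inside the block, gives
\[\E[\#_i B]=\sum_{k\ge 0}\big(\mathbf{m}_1^{\top}\tilde M^{k}\big)_i=\big(\mathbf{m}_1^{\top}(I-\tilde M)^{-1}\big)_i,\]
with $\mathbf{m}_1=(m_{1,j})_{j\ge 2}$. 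Equivalently, $x_i=\E[\#_iB]$ solves $x=\mathbf{m}_1+x\tilde M$.

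\textbf{Step 2: convergence of the series.} I need $\sum_k\tilde M^k<\infty$, that is, $\rho(\tilde M)<1$. Since $M$ is irreducible with $\rho=1$ (the critical case), any proper principal submatrix of an irreducible nonnegative matrix has strictly smaller spectral radius by Perron--Frobenius. Hence $\rho(\tilde M)<1$, $I-\tilde M$ is invertible, and the Neumann series converges, so all expectations are finite. I expect this to be the main obstacle. It is the only place where criticality and irreducibility genuinely enter, and it is what makes the monotone-convergence interchange of sums legitimate. For a subcritical tree one would get $\rho(\tilde M)<\rho<1$ as well. The statement as written implicitly uses the critical normalisation, under which $\boldsymbol\alpha$ is a left eigenvector for eigenvalue $1$.

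\textbf{Step 3: identify the solution with $\boldsymbol\alpha$.} Criticality gives $\boldsymbol\alpha M=\boldsymbol\alpha$. Taking the coordinates $i\ge 2$ yields $\alpha_i=\alpha_1 m_{1,i}+\sum_{j\ge2}\alpha_j m_{j,i}$. Dividing by $\alpha_1>0$ (Perron--Frobenius positivity), the vector $y=(\alpha_i/\alpha_1)_{i\ge2}$ satisfies $y=\mathbf{m}_1+y\tilde M$. Since $I-\tilde M$ is invertible, this system has a unique solution, so $x=y$, which is the claim $\E[\#_iB]=\alpha_i/\alpha_1$.
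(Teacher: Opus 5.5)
Your proof is correct and complete. The paper gives no argument of its own for this lemma, only a pointer to the proof of Proposition 4(ii) in \cite{M08}, and your computation is the standard first-moment argument for this identity: block counts equal to $\mathbf{m}_1^{\top}\sum_{k\ge 0}\tilde M^{k}$, then $\rho(\tilde M)<\rho(M)=1$ by Perron--Frobenius for a proper principal submatrix of an irreducible matrix, then $(\alpha_i/\alpha_1)_{i\ge 2}$ solving the same invertible system because $\boldsymbol\alpha M=\boldsymbol\alpha$. Your caveat about criticality is also correct: the paper states the lemma before imposing criticality, and for $\rho\neq 1$ the vector $(\alpha_i/\alpha_1)_{i\ge 2}$ instead solves $y(\rho I-\tilde M)=\mathbf{m}_1^{\top}$, so it no longer equals $\bigl(\E[\#_i B]\bigr)_{i\ge 2}$.
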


Unsurprisingly, the blocks of $T$ have, conditionally on existing, the distribution of $B$. Better, we have the following proposition:

\begin{prop} \label{prop:S&B BB}
Let $\b{t}=\Phi\big(s,(\b{b}_u,u\in\mathcal{U})\big)$ be a fixed $K$-tree. For $u\in s,$ let $k_u$ be its degree in s. \begin{itemize}
    \item[(i)] We have
\begin{align*}
\bb{P}[r_n(T)=\b{t}]&=\prod_{u\in s\setminus G_n(s)}\bb{P}\bigl(B=\cu{B}_u(\b{t})\bigr)\\
&=\bb{P}[r_n(\cu{S}(T))=s]\;\prod_{u\in s\setminus G_n(s)}\bb{P}\bigl(B^{k_u}=\cu{B}_u(\b{t})\bigr).
\end{align*}

\item [(ii)] If moreover $E$ is an $\cu{S}(T)$-measurable event, then
\[\bb{P}[r_n(T)=\b{t},E]=\bb{P}[r_n(\cu{S}(T))=s,E]\;\prod_{u\in s\setminus G_n(s)}\bb{P}\bigl(B^{k_u}=\cu{B}_u(\b{t})\bigr).\]
\end{itemize}
\end{prop}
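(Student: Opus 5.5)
We will prove (i) by regrouping the product formula defining the \BB tree. Throughout, $\b{t}=\Phi\big(s,(\b{b}_u,u\in\mathcal{U})\big)$ has $n$ root-type generations. By Theorem \ref{th:generalstructure}, the vertices of $\b{t}\setminus G_n(\b{t})$ are partitioned according to the skeleton vertex whose block contains them. Hooks are counted as belonging to the block of their parent skeleton vertex only as leaves, and as roots of their own block. Concretely, every $x\in\b{t}\setminus G_n(\b{t})$ equals $\pi(u)v$ for a unique $u\in s\setminus G_n(s)$ and a unique non-hook vertex $v$ of $\b{b}_u=\cu{B}_u(\b{t})$. Moreover, $\b{w}(x)$ in $\b{t}$ coincides with $\b{w}(v)$ in $\b{b}_u$, since the children of a non-hook vertex of a block all lie in the block.

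Hence the defining formula of Section \ref{sec:defBB} factorises as
\[\bb{P}[r_n(T)=\b{t}]=\prod_{u\in s\setminus G_n(s)}\;\prod_{v\in \b{b}_u\setminus G_1(\b{b}_u)}\zeta^{(e(v))}(\b{w}(v)).\]
The inner product is exactly $\pr[B=\b{b}_u]$, which gives the first equality of (i). For the second, write $\pr[B=\b{b}_u]=\pr[z_1(B)=k_u]\,\pr[B^{k_u}=\b{b}_u]$; this uses the compatibility condition in $\mathcal{E}$, namely $z_1(\b{b}_u)=k_u$. It remains to identify $\prod_{u\in s\setminus G_n(s)}\pr[z_1(B)=k_u]$ with $\pr[r_n(\cu{S}(T))=s]$. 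By Proposition \ref{prop: Skel BB}, $\cu{S}(T)$ is a monotype \BB tree, and its offspring law is that of the number of hooks of $r_1(T)=B$, i.e.\ of $z_1(B)$. So this product is the monotype \BB formula for $s$. Alternatively, and without invoking \cite{M08}, one can sum the first equality over all block families compatible with $s$; the sum factorises into $\prod_u\pr[z_1(B)=k_u]$, since blocks for distinct $u$ range independently.

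For (ii), we first reduce to a convenient class of events. An $\cu{S}(T)$-measurable event $E$ can be approximated, and by a monotone-class argument it suffices to treat events of the form $\{r_m(\cu{S}(T))=s'\}$ with $m\ge n$. Sets of this form generate the sigma-field and form a $\pi$-system, so both sides, viewed as finite measures in $E$, agree once they agree on these sets. For such an event, the left side is $\sum\pr[r_m(T)=\b{t}']$, where the sum runs over trees $\b{t}'$ with skeleton $s'$ and blocks equal to $\b{b}_u$ for $u\in s\setminus G_n(s)$, the other blocks being free subject to compatibility. Applying the second form of (i) at level $m$, the factors $\pr[B^{k_u}=\b{b}_u]$ for $|u|<n$ come out of the sum, and the remaining factors sum to $1$. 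What is left is $\pr[r_m(\cu{S}(T))=s']$, which equals $\pr[r_n(\cu{S}(T))=s,E]$ when $r_n(s')=s$ and vanishes otherwise.

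The main obstacle is bookkeeping rather than a genuine difficulty. One must check carefully that hooks are not double counted in the factorisation, i.e.\ that the leaves $G_1(\b{b}_u)$ are excluded in the block and reappear as roots of their own blocks. The measure-theoretic extension from cylinder events to general $E$ is routine.
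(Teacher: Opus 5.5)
Your proof is correct and follows the paper's route: you regroup the Bienaym\'e product formula block by block to get $\prod_u \pr[B=\b{b}_u]$, split off $\pr[z_1(B)=k_u]$ to recover the skeleton's law, and extend (ii) from finite-height skeleton events to general $\cu{S}(T)$-measurable $E$ by a monotone class argument. You make explicit two steps the paper leaves implicit: the identification $\prod_u\pr[z_1(B)=k_u]=\pr[r_n(\cu{S}(T))=s]$, which you can even derive by summing the first equality over compatible blocks, and the cylinder-event computation behind the paper's remark that finite-height events are ``part of (i)''.
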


This gives us ways of understanding the construction of  the \BB tree based on the two points of view from Section \ref{sec:pointsofview}:
\begin{itemize}
    \item In the ``pile of blocks'' point of view, $T$ has the distribution of $\Psi(B_u,u\in\mathcal{U})$ where the $B_u$ are independent and have the distribution of $B$.
    \item In the ``decorated skeleton'' point of view, the distribution of the skeleton has already been described in Proposition \ref{prop: Skel BB}. Conditionally on the skeleton, the blocks of $T$ are independent, and the block of $u\in\mathcal{U}$ has distribution $B^{k_u}$, where $k_u$ is the degree of $u$ in the skeleton.
\end{itemize} 
The latter of these two is more useful for how it will generalise to Kesten trees.

\begin{proof}
Part (i) can be seen directly from the structure of \BB trees, as we recall from Section \ref{sec:defBB} that probabilities are equal to the products of the probabilities of each vertex (except $n$-th generation type 1 leaves) having the correct $K$-degree:
\[
    \bb{P}[r_n(T)=\b{t}]=\prod_{u\in\b{t}\setminus{G_n(\b{t})}}\zeta^{\left(e(u)\right)}\bigl(\b{w}(u)\bigr)
\]
We reorganise the product by grouping the vertices per block:
\begin{align*}
    \bb{P}[r_n(T)=\b{t}]&=\prod_{v\in s\setminus{G_n(s)}}\prod_{x\in \b{b}_v\setminus \{G_1(\b(b)_v)\}}\zeta^{\left(e(x)\right)}\bigl(\b{w}(u)\bigr) \\
    &=\prod_{v\in s\setminus{G_n(s)}}\pr[B=\b{b}_v] \\
    &=\prod_{v\in s\setminus{G_n(s)}}\pr\left[z_1(B)=k_v\right] \pr[B^{k_v}=\b{b}_v] \\
      &=\prod_{v\in s\setminus{G_n(s)}}\pr\left[z_1(B)=k_v\right] \prod_{v\in s\setminus{G_n(s)}}\pr[B^{k_v}=\b{b}_v] \\
    &=\bb{P}[r_n(\cu{S}(T))=s]\prod_{v\in s\setminus{G_n(s)}}\pr[B^{k_v}=\b{b}_v],
\end{align*}
ending the proof of (i).

For part (ii), first notice that, if $E$ depends on $\cu{S}(T)$ only up until a finite height, then this is part of (i), and then a measure theory argument extends this to general $E$.
\end{proof}

Assume now that $\zeta$ is non-singular, irreducible and critical. The Kesten tree $\widehat{T}$ is then also nicely described as a decorated skeleton.

\begin{prop} \label{prop:S&B Kesten}
\begin{itemize}
\item Since $\cu{S}(T)$ is a critical \BB tree, its own (monotype) Kesten tree $\widehat{\cu{S}(T)}$ is well defined. Moreover,
\[\widehat{\cu{S}(T)} \overset{(d)}=\cu{S}(\widehat{T}).\]

\item Let $\b{t}=\Phi\big(s,(\b{b}_u,u\in\mathcal{U})\big)\in\bb{T}_K$ be a finite $K$-tree. For $u\in s,$ let $k_u$ be its degree in s. We then have, for $n\in\N$,
\[\bb{P}[r_n(\widehat{T})=\b{t}]=\bb{P}[\cu{S}(r_n(\widehat{T}))=s]\prod_{u\in s\setminus G_n(s)}\bb{P}\bigl(B^{k_u}=\cu{B}_u(\b{t})\bigr).
\]
\end{itemize}
\end{prop}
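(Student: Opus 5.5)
Both parts will follow from Proposition \ref{prop: K dist} combined with the block factorisation of Proposition \ref{prop:S&B BB}(i). The key observation is that, since the root has type 1, the root-type generations of $\b{t}$ and of its skeleton coincide. Precisely, $\pi$ maps $G_n(s)$ bijectively onto $G_n^{(1)}(\b{t})$, so that $z_n^{(1)}(\b{t})=z_n(s)$, and $r_n$ commutes with $\cu{S}$, i.e.\ $\cu{S}(r_n(\b{t}))=r_n(\cu{S}(\b{t}))$.

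I will prove the second point first. Let $\b{t}=\Phi(s,(\b{b}_u))$ be finite with $n$ root-type generations. Then \eqref{eq:Kestenrestriction} followed by Proposition \ref{prop:S&B BB}(i) gives
\begin{align*}
\bb{P}[r_n(\widehat{T})=\b{t}]&=z_n(s)\,\bb{P}[r_n(T)=\b{t}]\\
&=z_n(s)\,\bb{P}[r_n(\cu{S}(T))=s]\prod_{u\in s\setminus G_n(s)}\bb{P}\bigl(B^{k_u}=\b{b}_u\bigr).
\end{align*}
It remains to identify $z_n(s)\,\bb{P}[r_n(\cu{S}(T))=s]$ with $\bb{P}[\cu{S}(r_n(\widehat{T}))=s]$. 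To do this, I sum the displayed identity over all block families $(\b{b}_u)_{u\in s\setminus G_n(s)}$ that are compatible with $s$, meaning $\b{b}_u$ has exactly $k_u$ hooks. Each factor $\bb{P}(B^{k_u}=\cdot)$ is a probability distribution, so the product sums to $1$. On the left, the events $\{r_n(\widehat{T})=\b{t}\}$ over these families partition $\{\cu{S}(r_n(\widehat{T}))=s\}$, by the bijectivity of $\Phi$ (Theorem \ref{th:generalstructure}). This gives the required identity and proves the second point.

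The same identity reads $\bb{P}[r_n(\cu{S}(\widehat{T}))=s]=z_n(s)\,\bb{P}[r_n(\cu{S}(T))=s]$. Since $\cu{S}(T)$ is a critical monotype \BB tree by Proposition \ref{prop: Skel BB}, the monotype case of \eqref{eq:Kestenrestriction} (take $K=1$, so that $\boldsymbol\beta$ is trivial) says the right-hand side equals $\bb{P}[r_n(\widehat{\cu{S}(T)})=s]$. Thus the laws of $r_n(\cu{S}(\widehat{T}))$ and $r_n(\widehat{\cu{S}(T)})$ agree for every $n$. Since the $r_n$ generate the Borel $\sigma$-field, this gives $\cu{S}(\widehat{T})\overset{(d)}=\widehat{\cu{S}(T)}$, which is the first point.

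The main obstacle is the summation step. I must check carefully that each such $\b{t}$ really has exactly $n$ root-type generations, so that \eqref{eq:Kestenrestriction} applies with the same $n$ in every term. This holds because every $u\in s\setminus G_n(s)$ carries a genuine block and the vertices of $G_n(s)$ are hooks. I must also check that there are no convergence issues: everything is a sum of nonnegative terms over countably many finite trees, so none arise. A further minor point is that the monotype Kesten tree used here has the standard Kesten construction, since with $K=1$ the size-biasing in $\widehat{\zeta}$ is the usual one. One should also note that $\zeta$ non-singular and irreducible ensures $\cu{S}(T)$ is non-degenerate, so that its Kesten tree is well defined.
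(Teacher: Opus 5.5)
Your proposal is correct and follows essentially the paper's own proof. It combines \eqref{eq:Kestenrestriction} with the block factorisation of Proposition~\ref{prop:S&B BB}(i), uses $z_n^{(1)}(\b{t})=z_n(s)$, and recognises $z_n(s)\,\pr[r_n(\cu{S}(T))=s]$ as the monotype Kesten probability $\pr[r_n(\widehat{\cu{S}(T)})=s]$. The only difference is that you write out the sum over compatible block families, which the paper leaves implicit in ``simultaneously showing''; that sum tacitly needs each $B^{k}$ to be a.s.\ finite, which holds because $B$ has finite expected size by Lemma~\ref{lemma:sizeblock} and by the criticality of the skeleton.
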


This means that, while their skeletons are different, the conditional distributions of blocks given the skeleton is the \emph{same} in both $T$ and $\widehat{T}.$

\begin{proof}
    This is in fact immediate. Recall from Section \ref{sec:defBB} that $\bb{P}[r_n(\widehat{T})=\b{t}]=z_n(\b{t})\bb{P}[r_n(\widehat{T})=\b{t}].$ Applying Proposition \ref{prop:S&B BB}, we then have
\begin{align*}
    \bb{P}[r_n(\widehat{T})=\b{t}]&=z_n(\b{t})\bb{P}[r_n(\cu{S}(T))=s]\prod_{v\in s\setminus{G_n(s)}}\pr[B^{k_v}=\b{b}_v] \\
    &=z_n(s)\bb{P}[r_n(\cu{S}(T))=s]\prod_{v\in s\setminus{G_n(s)}}\pr[B^{k_v}=\b{b}_v] \\
    &=\bb{P}[r_n(\widehat{\cu{S}(T)})=s]\prod_{v\in s\setminus{G_n(s)}}\pr[B^{k_v}=\b{b}_v],
\end{align*}
simultaneously showing that the skeleton of $\widehat{T}$ has the same distribution as $\widehat{\cu{S}(T)}$, and that the conditioned distribution of the blocks is the same as that of those $T$.
\end{proof}

\subsection{Local limit when conditioning on the skeleton: proof of Theorem \ref{skeleton result}}

Recall that, for $n\in\N,$ $A_n$ denotes an $\cu{S}(T)$-measurable event, $T_n$ has the distribution of $T$ conditioned on $A_n$, and we assume that $\cu{S}(T_n)$ converges in distribution to $\cu{S}(\widehat{T}).$

We use Corollary \ref{cor:generalconvergence}. Let $h\in\bb{Z_+}$ and $\b{t}=\Phi\big(\big(s,(\b{b}_u,u\in\mathcal{U})\big)\big)\in\bb{T}_K.$ We have by assumption that 
$\pr[r_h(\cu{S}(T_n))=s]$ converges to $\pr[r_h(\cu{S}(\widehat{T}))=s],$ while, using part (ii) of Proposition \ref{prop:S&B BB}, and since $A_n$ is $\cu{S}(T)$-measurable:
\begin{align*}
\pr[\,\forall u \in s, B_u(T_n)=\b{b}_u \mid r_h(\cu{S}(T_n))=s] &= \pr[\,\forall u \in s, B_u(T)=\b{b}_u \mid r_h(\cu{S}(T))=s,A_n] \\
&=\prod_{v\in s\setminus{G_h(s)}}\pr[B^{k_v}=\b{b}_v].
\end{align*}
This constant value is the same as $\pr[\,\forall u \in s, B_u(T)=\b{b}_u\mid r_h(\cu{S}(\widehat{T}))=s]$ by Proposition \ref{prop:S&B Kesten}, and the proof is complete.
\qed

\section{Non-critical Cases}\label{non-crit}

This theory is mostly interesting in the case of critical \BB trees, because there is no analogue to \eqref{eq:Kestenrestriction} when the Perron eigenvalue $\rho$ of the mean matrix is not $1$. This is because, since the offspring distribution for special vertices has a factor of $\frac1\rho$, the analogue of \eqref{eq:Kestenrestrictionwithspecial} would be 
\[\bb{P}^{(i)}\left[r_n(\widehat{T})=\b{t}, \ell \text{ is special}\right]=\,\frac1{\rho^{|\ell|}}\bb{P}^{(i)}\left[r_n(T)=\b{t}\right],\] which cannot be easily summed up anymore (unless $|\ell|$ is fixed, see Section \ref{sec:periodic}). 


However, it remains interesting to ask whether $\widehat{\cu{S}(T)}\overset{(d)}=\cu{S}(\widehat{T})$ still holds.

\subsection{No general equivalence between $\widehat{\cu{S}(T)}$ and $\cu{S}(\widehat{T})$}
Rather unsurprisingly, $\widehat{\cu{S}(T)}\overset{(d)}=\cu{S}(\widehat{T})$ is not true in general, and we provide a counterexample.

Let $T$ be a two-type tree with ordered offspring distribution $\zeta$, given by \[\zeta^{(1)}(1,2)=1/2,\quad
\zeta^{(1)}(\emptyset)=1/2,\quad
\zeta^{(2)}(1)=1/2,\quad
\zeta^{(2)}(1,1)=1/2.\]
Its mean matrix is $\begin{pmatrix}
    1/2&1/2\\3/2&0
\end{pmatrix}$, with spectral radius $\rho_\zeta=\frac14(1+\sqrt{13})>1$, and the right eigenvector satisfies $\beta_1=\frac{\beta_2}6(1+\sqrt{13})$. The size-biased version of $\zeta$ can be seen to satisfy
\[\widehat{\zeta^{(1)}}(1,2)=1,\quad
    \widehat{\zeta^{(2)}}(1)=1/3,\quad
    \widehat{\zeta^{(2)}}(1,1)=2/3.\]


Let us take the root type to be $1$. The offspring distribution of the skeleton, that we call $\xi,$ satisfies
\[\xi(\emptyset)=1/2,\quad
    \xi(1,1)=1/4,\quad     \xi(1,1,1)=1/4.\]
Note that its mean is $5/4$, and we can find its size-biased version $\widehat{\xi}$, given by $\widehat{\xi}(1,1)=2/5$ and $\widehat{\xi}(1,1,1)=3/5$.

Finally we want to understand the distribution of $S(\widehat{T}).$ This tree has both special and normal vertices, with the normal ones having offspring distribution $\xi$ (since $\widehat{T}$, outside of its spine, behaves like $T$). A special node of $S(\widehat{T})$ corresponds to a special type 1 node of $\widehat{T}$; it almost surely has one child of type 1 and one of type 2, one of which is chosen to be special, with probabilities $\frac{\beta_1}{\beta_1+\beta_2}$ and $\frac{\beta_2}{\beta_1+\beta_2}$ respectively. A bit of conditional probability based on which of these is special then yields the distribution we will call $\widehat{\zeta}^S$:

\begin{align*}
        \widehat{\zeta}^S(1,1)&=\frac{\beta_1}{\beta_1+\beta_2}\frac12+\frac{\beta_2}{\beta_1+\beta_2}\frac13\\
        &=\frac{11+\sqrt{13}}{36}\simeq0.4057
    \end{align*}
    \begin{align*}
        \widehat{\zeta}^S(1,1,1)&=\frac{\beta_1}{\beta_1+\beta_2}\frac12+\frac{\beta_2}{\beta_1+\beta_2}\frac23\\
        &=\frac{25-\sqrt{13}}{36}\simeq0.5943.
    \end{align*}

Note that these are close, but not equal, to $2/5$ and $3/5$, and thus, we have $\widehat{\cu S(T)}\overset{(d)}{\neq}\cu S(\widehat{T})$. It is also worth noting that, in $S(\widehat{T}),$ the next special vertex is not chosen uniformly from the children of the current one: it is for example always equal to the leftmost child with probability $\frac{\beta_1}{\beta_1+\beta_2}.$ This argument shows that not only do we have $\widehat{\cu S(T)}\overset{(d)}{\neq}\cu S(\widehat{T}),$ but in fact $\cu S(\widehat{T})$ \emph{is not a Kesten tree at all}: there is no monotype \BB tree $U$ such that $\cu S(\widehat{T})\overset{(d)}=\widehat{U}.$

\subsection{The case of deterministic hook heights}\label{sec:periodic}
One type of non-critical tree where the skeleton still behaves well is those for which height of the root-type leaves in each block is fixed.

\begin{prop}
    \label{deterministic block height}
Let $T$ be a $K$-type \BB tree with nonsingular, irreducible, offspring distribution $\zeta$, with finite mean matrix, and taken with root of type $1$. We assume that the hooks in all the blocks are at height $d\in\N$. We then have
    \[\widehat{\cu{S}(T)}\overset{(d)}=\cu{S}(\widehat{T}).\]

\end{prop}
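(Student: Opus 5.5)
The plan is to redo the computation of Proposition \ref{prop: K dist} without assuming $\rho=1$, and then to use the deterministic hook height to make the resulting factor of $\rho$ constant. Let $\b{t}$ be a $K$-tree with root of type $1$ and $n$ generations of type $1$, and let $\ell\in G_n^{(1)}(\b{t})$. The same telescoping product as in that proof, now keeping the factor $\frac1\rho$ in $\widehat{\zeta}$ for each of the $|\ell|$ special vertices on the ancestral line of $\ell$, gives
\[\pr\left[r_n(\widehat{T})=\b{t},\ \ell \text{ is special}\right]=\frac{1}{\rho^{|\ell|}}\,\frac{\beta_{\ell}}{\beta_{\emptyset}}\,\pr\left[r_n(T)=\b{t}\right]=\frac{1}{\rho^{|\ell|}}\,\pr\left[r_n(T)=\b{t}\right].\]
This is the formula displayed at the start of Section \ref{non-crit}; the two $\beta$'s cancel because both vertices have type $1$.

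The key observation is that, under our assumption, every $\ell\in G_n^{(1)}(\b{t})$ is reached by stacking $n$ blocks, so $|\ell|=nd$. This holds at least for the $\b{t}$ that occur with positive probability, which are the only ones that matter. The prefactor is therefore the constant $\rho^{-nd}$, and summing over $\ell$ gives
\[\pr\left[r_n(\widehat{T})=\b{t}\right]=\rho^{-nd}\,z_n^{(1)}(\b{t})\,\pr\left[r_n(T)=\b{t}\right].\]
Now $z_n^{(1)}(\b{t})=z_n(s)$, where $s=\cu{S}(\b{t})$, and $r_n$ commutes with $\cu{S}$ in the sense that $\cu{S}(r_n(\b{t}))=r_n(\cu{S}(\b{t}))$. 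Summing over all $\b{t}$ with $r_n(\cu{S}(\b{t}))=s$ therefore yields
\[\pr\left[r_n(\cu{S}(\widehat{T}))=s\right]=\rho^{-nd}\,z_n(s)\,\pr\left[r_n(\cu{S}(T))=s\right].\]
We would like the factor $\rho^{-nd}$ to be $m^{-n}$, where $m$ is the mean of the skeleton's offspring distribution $\xi$ (which is a \BB offspring distribution by Proposition \ref{prop: Skel BB}, or directly by Proposition \ref{prop:S&B BB}).

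To identify $m$, I take $n=1$ and sum over all $s$. The left-hand side sums to $\pr[r_1(\cu{S}(\widehat{T}))\text{ finite}]$, which is $1$: the tree $\widehat{T}$ is locally finite because its offspring lists are finite, and the block at the root has height $d$. This gives $1=\rho^{-d}\,\E[z_1(\cu{S}(T))]=\rho^{-d}m$, so $m=\rho^d$; in particular $m$ is finite and positive. (Alternatively, $m=\rho^d$ follows from $\boldsymbol{\beta}$ being a right eigenvector: $\E[\beta\text{-weight of generation } d]=\rho^d\beta_1$, and at height $d$ all type-$1$ vertices are hooks. Some care is needed here, because non-hook vertices of other types could also sit at height $d$, so the normalization argument above is the cleaner route.) Hence $\pr[r_n(\cu{S}(\widehat{T}))=s]=m^{-n}z_n(s)\pr[r_n(\cu{S}(T))=s]$ for all $n$ and $s$.

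It remains to check that the monotype Kesten tree $\widehat{\cu{S}(T)}$, built from $\xi$ with size-biased law $\widehat{\xi}(\b{w})=\frac{|\b{w}|}{m}\xi(\b{w})$ and a uniformly chosen special child, satisfies the same identity. This is the monotype, non-critical case of the computation in Proposition \ref{prop: K dist}, with all $\beta$'s equal to $1$ and $\rho$ replaced by $m$: each of the $n$ spine vertices contributes a factor $\frac1m$, and summing over the $z_n(s)$ choices of $\ell$ gives exactly $m^{-n}z_n(s)\pr[r_n(\cu{S}(T))=s]$. The two laws therefore agree on every $r_n$, and since the $r_n$ generate the $\sigma$-field, $\widehat{\cu{S}(T)}\overset{(d)}=\cu{S}(\widehat{T})$.

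The main obstacle is the bookkeeping in the first step: justifying $|\ell|=nd$ for every relevant $\ell$, and handling the $\frac1\rho$ factor carefully in the telescoping product. The identification $m=\rho^d$ is the only genuinely new input, and the normalization trick handles it.
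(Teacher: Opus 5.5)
Your overall structure matches the paper's. You use the telescoping product with the factor $\rho^{-|\ell|}$, note that $|\ell|=nd$ on trees of positive probability, sum over all $\b{t}$ with a given skeleton, and compare with the monotype non-critical Kesten formula (the paper's Lemma~\ref{lemma:distrKofSperiodic}). The only real difference is how you identify the skeleton mean $m$ with $\rho^d$.

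The paper first shows that almost surely no vertex of type other than $1$ lies at height $d$. If a block contained such a vertex with positive probability, irreducibility would give it, with positive probability, a type-$1$ descendant, which would be a hook at height $>d$. So generation $d$ consists only of hooks, the first row of $M^d$ is $(m,0,\ldots,0)$, and $M^d\boldsymbol{\beta}=\rho^d\boldsymbol{\beta}$ forces $m=\rho^d$. Your normalization argument instead uses $M\boldsymbol{\beta}=\rho\boldsymbol{\beta}$ only through the fact that $\widehat\zeta$ is a probability measure. That is a nice observation, and it gives $m<\infty$ for free.

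As written, though, the normalization does not avoid the difficulty raised in your parenthetical; it depends on it. Summing your $n=1$ identity over finite $\b{t}$ only yields
\[\pr\bigl[r_1(\widehat{T})\text{ finite}\bigr]=\rho^{-d}\,\E\bigl[z_1(\cu{S}(T))\,;\,r_1(T)\text{ finite}\bigr].\]
So you need the root blocks of both $T$ and $\widehat{T}$ to be almost surely finite.

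Your justification, that ``the block at the root has height $d$'', is not given by the hypothesis, which only constrains hooks. A priori a block could contain vertices of non-root type at height $\geq d$, possibly with infinitely many non-root-type descendants inside the block. Your worry that non-hook vertices of other types might sit at height $d$ is exactly this scenario. What rules it out is the irreducibility argument above. For $\widehat{T}$ you must also transfer the fact from $T$. For instance, $\widehat\zeta^{(i)}(\b{w})>0$ implies $\zeta^{(i)}(\b{w})>0$, so every finite height-truncation with positive probability under $\widehat{T}$ also has positive probability under $T$.

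Once you supply that argument, the eigenvector computation in your parenthetical closes immediately and is exactly the paper's proof. The normalization is therefore a valid alternative, but not a shortcut: it needs the same structural input plus the transfer step.
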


In order to prove this, we need some setup.

\begin{lemma}
    The mean of the offspring distribution of the monotype tree $\cu{S}(T)$ is $\rho^d$, where $\rho$ is the spectral radius of the mean matrix of $\zeta$.
\end{lemma}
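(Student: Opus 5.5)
We compute the expected number of hooks of the block $B=r_1(T)$ using the right Perron eigenvector $\boldsymbol{\beta}$ of the mean matrix $M$. The underlying martingale identity is this: for any vertex $u$ of type $i$, the expected total $\boldsymbol{\beta}$-weight of its children is $\sum_j m_{i,j}\beta_j=\rho\beta_i$.

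To exploit it, we consider the population of $B$ at height $k$ for $0\le k\le d$. Write $W_k$ for the sum of $\beta_v$ over all vertices $v$ of $B$ at height $k$. By the hypothesis on hooks, no root-type vertex appears in $B$ at heights $1,\ldots,d-1$, so every vertex at height $k<d$ reproduces according to $\zeta$, exactly as in $T$. Moreover, every vertex at height $d$ is a hook, of type $1$. There may also be non-root-type vertices that die before reaching height $d$; these simply contribute nothing further, which is consistent with the branching computation.

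Conditioning on the vertices at height $k$ and using the branching property, we get $\E[W_{k+1}\mid \text{generation }k]=\rho W_k$ for $0\le k<d$, hence $\E[W_d]=\rho^d W_0=\rho^d\beta_1$. Since all vertices at height $d$ are hooks of type $1$, $W_d=\beta_1 z_1(B)$, so $\E[z_1(B)]=\rho^d$. Because the offspring of the root in $\cu{S}(T)$ is distributed as $z_1(B)$ (the proof of Proposition \ref{prop:S&B BB}(i) shows $\pr[k_\emptyset(\cu{S}(T))=p]=\pr[z_1(B)=p]$), the mean offspring of the skeleton is $\rho^d$.

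The only delicate points are bookkeeping. First, we must check that the generation-by-generation conditioning within a block matches the generation structure of $T$; this holds since heights in $B$ are ordinary heights in $T$ and no type-$1$ vertex stops reproduction before height $d$. Second, finiteness of $M$ is needed to make the expectations finite, and irreducibility is needed for $\beta_1>0$ so that we may divide by it. I expect no real obstacle beyond stating the one-step identity $\E[\sum_{\text{children}}\beta]=\rho\beta_i$ cleanly.
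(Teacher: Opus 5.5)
Your $\boldsymbol{\beta}$-weighted count is the paper's argument in probabilistic form. $\E[W_d]=\rho^d\beta_1$ is the identity $e_1^{\top}M^d\boldsymbol{\beta}=\rho^d\beta_1$, and the paper concludes from it once it knows that the first row of $M^d$ is $(a,0,\ldots,0)$.

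\textbf{The gap.} You write that ``every vertex at height $d$ is a hook, of type $1$'' and attribute this to the hypothesis on hooks. That hypothesis only says where root-type vertices sit. It does not by itself forbid a vertex $v$ of type $j\neq 1$ at height $d$ in $B$: if the line of descent of $v$ never returns to type $1$, then $v$ produces no hook at all. Yet you need exactly this for $W_d=\beta_1 z_1(B)$; without it you only get $\E[z_1(B)]\le\rho^d$. This is the one step the paper actually argues, and it uses irreducibility. Suppose such a $v$ existed with positive probability. Since $(M^n)_{j,1}>0$ for some $n$, with positive probability $v$ has a type-$1$ descendant. The first type-$1$ vertex on the path from $v$ down to it is then a hook of $B$ at height larger than $d$, which contradicts the hypothesis.

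So irreducibility is needed here, not only for $\beta_1>0$ as you claim. The step really fails without it. Take two types with $\zeta^{(1)}(1)=\zeta^{(1)}(2)=1/2$ and $\zeta^{(2)}(2,2)=1$; this law is non-singular but reducible.
\begin{itemize}
\item All hooks are at height $d=1$.
\item $M=\begin{pmatrix}1/2&1/2\\0&2\end{pmatrix}$ is finite, with $\rho=2$ and positive right eigenvector $(1,3)$.
\item So every ingredient you list holds, and indeed $\E[W_1]=2$.
\item But the skeleton's mean offspring is $1/2$, because $W_1$ also counts the type-$2$ child.
\end{itemize}
Inserting the irreducibility argument above repairs the proof. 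The rest is fine, including your explicit identification of $\E[z_1(B)]$ with the skeleton's mean offspring, which the paper leaves implicit.
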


\begin{proof}
    Notice first that, as a consequence of our assumption that all hooks of blocks have height $d$, almost surely no vertex of type different from $1$ has height $d$. Indeed, if this happened with positive probability, then this would yield with positive probability a block with at least one hook of height larger than $d$. As such, we can see that the first row of the matrix $M^d$ is of the form $(a,0,\ldots,0)$ with $a\in\R$, and in fact we can show that $a=\rho^d.$ Consider the eigenvector $\boldsymbol{\beta}$ for the eigenvalue $\rho$, we know that it has positive entries and that $M\boldsymbol{\beta}=\rho \beta,$ hence $M^d\boldsymbol{\beta}=\rho^d\boldsymbol{\beta}.$ Since the first row of $M^d$ is $(a,0,\ldots,0),$ we deduce by looking at the first entry of the vectors $M\boldsymbol{\beta}$ and $\rho^d \boldsymbol{\beta}$ that $a\beta_1=\rho^d\beta_1,$ and conclude that $a=\rho^n$ since $\beta_1>0.$
\end{proof}

From this we deduce in particular the distribution of $\widehat{\cu{S}(T)}.$ 

\begin{lemma}\label{lemma:distrKofSperiodic}
    For $t$ a monotype tree with height $n\in\N$, we have
\[\pr[r_n(\widehat{\cu{S}(T)})=t]=\frac{z_n(t)}{\rho^{nd}}\pr[r_n(\cu{S}(T)=t)],\]
where $z_n(t)$ is the number of vertices of height $n$ in $t$.
\end{lemma}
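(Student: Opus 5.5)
This is the monotype version of Proposition~\ref{prop: K dist}, applied to the monotype \BB tree $\cu{S}(T)$. By the preceding lemma, the mean of its offspring distribution $\xi$ is $m:=\rho^d$. In the monotype case the mean matrix is the $1\times1$ matrix $(m)$, with $\beta=1$. The size-biased law is therefore $\widehat{\xi}(k)=\frac{k}{m}\xi(k)$, and the special child is chosen uniformly among the $k$ children. I will simply redo the telescoping computation from the proof of Proposition~\ref{prop: K dist}, now keeping track of the factor $1/\rho$, which no longer equals $1$.

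Fix a monotype tree $t$ of height $n$ and a vertex $\ell$ at height $n$, with ancestral line $u_0=\emptyset,u_1,\ldots,u_n=\ell$. Build $\widehat{\cu{S}(T)}$ generation by generation, as in the definition of Kesten trees. Then $\pr[r_n(\widehat{\cu{S}(T)})=t,\ \ell\text{ special}]$ factorises:
\begin{itemize}
\item each vertex $u\notin\{u_0,\dots,u_{n-1}\}$ of height $<n$ contributes $\xi(k_u)$;
\item each spine vertex $u_j$ contributes $\widehat{\xi}(k_{u_j})\cdot\frac{1}{k_{u_j}}=\frac{1}{m}\xi(k_{u_j})$.
\end{itemize}
The factors $k_{u_j}$ cancel exactly, leaving
\[
\pr\big[r_n(\widehat{\cu{S}(T)})=t,\ \ell\text{ special}\big]=m^{-n}\prod_{u\in t,\,|u|<n}\xi(k_u)=\rho^{-nd}\,\pr\big[r_n(\cu{S}(T))=t\big].
\]
This is the displayed ``analogue of \eqref{eq:Kestenrestrictionwithspecial}'' from the start of Section~\ref{non-crit}, with $\rho$ replaced by $\rho^d$.

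The crucial point is that the exponent $|\ell|=n$ is the same for every $\ell\in G_n(t)$, since all candidates sit at height $n$ in the monotype tree. Summing over the $z_n(t)$ possible choices of $\ell$ therefore gives the claimed formula. This uses only that exactly one vertex of generation $n$ is special almost surely.

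I expect no real obstacle here. The only care needed is to confirm that the monotype Kesten tree is defined with the factor $1/m$ when $m\neq1$, consistent with the paper's definition via $\frac1\rho$ with $\beta\equiv1$. One should also check that $m=\rho^d$ is finite and positive, so that $\widehat\xi$ is a genuine probability distribution; finiteness follows from the finite mean matrix.
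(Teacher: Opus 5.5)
Your proposal is correct and follows the direct route the paper indicates: redo the computation of Proposition~\ref{prop: K dist} for the monotype tree $\cu{S}(T)$ with offspring mean $\rho^d$ and $\beta\equiv 1$, so that each spine vertex contributes a factor $\rho^{-d}$, and then sum over the $z_n(t)$ possible choices of $\ell$, which all sit at the same height $n$. The paper also notes that the formula can simply be quoted as a classical monotype result from \cite{AD14}.
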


This can be seen either as a classical result for monotype trees (see \cite[p.7]{AD14}), or can be proven directly in the manner of Proposition \ref{prop: K dist} in the monotype case, using that the expectation of the offspring distribution is $\rho^d.$

We now write the distribution of $\widehat{T}.$ The fact that the heights of hooks is fixed means we \emph{do} have an analogue of Proposition \ref{prop: K dist} in this situation.
\begin{lemma}\label{cut kesten non crit}
For $\b t$ a $K$-tree with root of type $1$ with $n\in\Z_+$ generations of type $1$, and with $\ell\in G_n^{(1)}(\b{t})$, we have the following:
\begin{equation}
    \bb{P}\left[r_n(\widehat T)=\b t,\;\ell\text{ is special}\right] = \frac{\bb P[r_n(T)=\b t]}{\rho^{nd}},
\end{equation}
\begin{equation}\label{eq:non-crit kest}
    \bb P[r_n(\widehat T)=\b t] = \frac{z_n^{(1)}(\b{t})}{\rho^{nd}}\bb P[r_1(T)=\b t].
\end{equation}
\end{lemma}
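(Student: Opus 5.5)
The plan is to copy the computation in the proof of Proposition \ref{prop: K dist}, but keep track of the factors of $\frac1\rho$ coming from the size-biased law $\widehat{\zeta}$. These factors no longer cancel. We then use the deterministic hook height to show that the spine from the root to $\ell$ always has exactly $nd$ edges, so the number of $\frac1\rho$ factors is constant.

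Fix $\b t$ with root of type $1$, with $n$ generations of type $1$, and fix $\ell\in G_n^{(1)}(\b t)$. Write $(u_0=\emptyset,u_1,\ldots,u_m=\ell)$ for the ancestral line of $\ell$ in $\b t$, where $m=|\ell|$.

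First, as in Proposition \ref{prop: K dist}, the event $\{r_n(\widehat T)=\b t,\ \ell \text{ special}\}$ factorises:
\begin{itemize}
\item every vertex off the ancestral line contributes $\zeta^{(e(u))}(\b w(u))$;
\item every $u_k$ with $k<m$ contributes $\widehat{\zeta}^{(e(u_k))}(\b w(u_k))\,\beta_{u_{k+1}}/\b z_{\b w(u_k)}$.
\end{itemize}
Substituting $\widehat{\zeta}^{(i)}(\b w)=\frac{1}{\rho}\frac{\b z_{\b w}}{\beta_i}\zeta^{(i)}(\b w)$, the factors $\b z$ cancel and the ratios of $\beta$'s telescope to $\beta_\ell/\beta_\emptyset=1$, since both vertices have type $1$. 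What remains is
\[\pr\left[r_n(\widehat T)=\b t,\ \ell\text{ special}\right]=\rho^{-m}\,\pr[r_n(T)=\b t].\]

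Second, we identify $m$. The path from the root to $\ell$ passes through $n$ blocks, namely those of the skeleton ancestors of $\ell$. In each of these blocks it goes from the block's root to one of its hooks. By hypothesis every hook sits at height exactly $d$ within its block, so $m=nd$.

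One subtlety: the hypothesis only holds almost surely, whereas $\b t$ is deterministic. If some block of $\b t$ has a hook at a height other than $d$, then $\pr[r_n(T)=\b t]=0$. The left-hand side then also vanishes, because $\widehat{\zeta}$ is absolutely continuous with respect to $\zeta$. So both identities hold trivially in that case, and we may assume all hooks of $\b t$ are at height $d$. This gives the first identity.

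Third, summing over $\ell\in G_n^{(1)}(\b t)$ gives the second identity, with factor $z_n^{(1)}(\b t)\rho^{-nd}$. Here "exactly one vertex of $G^{(1)}_n$ is special" is used: since the hooks of $\b t$ are at generation $n$, the spine meets type-$1$ generation $n$ exactly once. The right-hand side should read $\pr[r_n(T)=\b t]$; the $r_1$ in \eqref{eq:non-crit kest} appears to be a typo.

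The only step that needs care is the second one, namely checking that the path length is exactly $nd$ and handling trees $\b t$ with atypical hook heights. The rest is the same algebra as before.
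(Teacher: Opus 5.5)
Your proof is correct and follows the paper's own (only sketched) argument. You redo the computation of Proposition~\ref{prop: K dist}, keep the uncancelled factor $\rho^{-|\ell|}$ coming from $\widehat{\zeta}$, use the deterministic hook height to get $|\ell|=nd$, and then sum over $\ell$. Your handling of trees $\b t$ with atypical hook heights and your reading of $r_1$ as a typo for $r_n$ in \eqref{eq:non-crit kest} are both right, and they make explicit what the paper leaves implicit.
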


\begin{proof}
    This is proved in much the same way as Proposition \ref{prop: K dist}. Note that the fixed height means we always get exactly $nd$ factors of $\rho$ in the denominator.
\end{proof}

\begin{proof}[Proof of Proposition \ref{deterministic block height}]
The proof is completed by noting that the skeleton is a measurable function of the tree. More specifically, write the event $\{r_n(\cu{S}(T)=t)\},$ for a fixed monotype tree $t$ of height $n$, as $\{\underset{\mathbf{t}:\cu{S}(\mathbf{t})=t}\cup r_n(T)=\b t\}.$ We can then, by \eqref{eq:non-crit kest}, write

\begin{align*} 
\bb{P}[r_n(\cu{S}(\widehat{T}))=t]&=\sum_{\mathbf{t}:\cu{S}(\mathbf{t})=t}\frac{z_n^{(1)}(\b{t})}{\rho^{nd}} \bb{P}[r_n(T)=\b t] \\
&=\frac{z_n^{(1)}(\b{t})}{\rho^{nd}} \sum_{\mathbf{t}:\cu{S}(\mathbf{t})=t}\bb{P}[r_n(T)=\b t] \\
&=\frac{z_n(t)}{\rho^{nd}}\pr[r_n(\cu{S}(T))=t] \\
&=\pr[r_n(\widehat{\cu{S}(T)})=t].
\end{align*}

This shows that $r_n(\cu{S}(\widehat{T}))$ and $r_n(\widehat{\cu{S}(T)})$ have the same distribution for all $n$, hence $\cu{S}(\widehat{T})$ and $\widehat{\cu{S}(T)}$ also do.
\end{proof}

We end this section with an example of a tree which fits the above framework when using a root of type $1$, and does not when the root is of type $2$, and turns out to only satisfy $\widehat{\cu{S}(T)}\overset{(d)}=\cu{S}(\widehat{T})$ if using the root of type $1$. Specifically, consider the following offspring distribution over $3$ types:

\[\zeta^{(1)}(2)=1/2,\quad
\zeta^{(1)}(3,3)=1/2,\quad
\zeta^{(2)}(1)=1,\quad
\zeta^{(3)}(1)=1.\]
Its mean matrix is 
$M=\begin{pmatrix}
    0 & 1/2&1 \\ 1&0&0\\ 1&0&0 \end{pmatrix},$
and the tree is easily seen to be supercritical with $\rho=\sqrt{3/2}.$ When considering a root of type $1$, this offspring distribution clearly fits the framework above with $d=2$ and so we do have $\widehat{\cu{S}(T)}\overset{(d)}=\cu{S}(\widehat{T}).$ However, if the root is of type $2$, then the expected value of the offspring distribution on the skeleton turns out to be infinite (one can see that it has the same distribution as the number of leaves of a monotype critical binary \BB tree). In this case, $\widehat{\cu{S}(T)}$ is not even defined, unlike $\cu{S}(\widehat{T})$.

\medskip \noindent \textbf{Concluding remarks.} This is an early version of this paper. Future versions will go deeper, exploring conditionings which aren't just on a function of the skeleton, the main example being adding a random variable at each root-type vertex. This will notably yield connections with monotype trees with an additive decoration at each vertex.

\medskip \noindent \textbf{Acknowledgments.}  The authors would like to thank Jonathan Jordan for some helpful discussions, and for the code which helped produce Figure \ref{fig:mainfig}. 

\bibliographystyle{alpha}
\bibliography{bibliography}

@article{AD14,
author = {Romain Abraham and Jean-Fran{\c{c}}ois Delmas},
title = {{Local limits of conditioned {G}alton-{W}atson trees: the infinite spine case}},
volume = {19},
journal = {Electronic Journal of Probability},
number = {none},
publisher = {Institute of Mathematical Statistics and Bernoulli Society},
pages = {1 -- 19},
year = {2014},
doi = {10.1214/EJP.v19-2747},
URL = {https://doi.org/10.1214/EJP.v19-2747}
}

@book{Bertoin_Curien_Riera_2027, place={Cambridge}, series={Institute of Mathematical Statistics Monographs}, title={Self-Similar Markov Trees and Scaling Limits}, publisher={Cambridge University Press}, author={Bertoin, Jean and Curien, Nicolas and Riera, Armand}, year={2027}, collection={Institute of Mathematical Statistics Monographs}}

@article{Kesten,
author = {Kesten, Harry},
journal = {Annales de l'I.H.P. Probabilités et statistiques},
language = {eng},
number = {4},
pages = {425-487},
publisher = {Gauthier-Villars},
title = {Subdiffusive behavior of random walk on a random cluster},
url = {http://eudml.org/doc/77287},
volume = {22},
year = {1986},
}

@article{tiltings2025,
  title={Existence of critical tiltings and local limits of general size-conditioned Bienaym\'e-Galton-Watson multitype trees},
  author={Poudevigne, Rémy and Thévenin, Paul},
  journal={arXiv preprint arXiv:2503.11501},
  year={2025}
}

@article{addarioberry2025scalinglimitsmultitypebienayme,
  title={Scaling limits of multitype {B}ienaym\'e trees},
  author={Louigi Addario-Berry and Philipp Beltran and Benedikt Stufler and Paul Thévenin},
  journal={arXiv preprint arXiv:2507.23241},
  year={2025}
}

@article {Robin,
    AUTHOR = {Stephenson, Robin},
     TITLE = {Local convergence of large critical multi-type
              {G}alton-{W}atson trees and applications to random maps},
   JOURNAL = {J. Theoret. Probab.},
  FJOURNAL = {Journal of Theoretical Probability},
    VOLUME = {31},
      YEAR = {2018},
    NUMBER = {1},
     PAGES = {159--205},
      ISSN = {0894-9840},
   MRCLASS = {05C63 (05C80 05C81 60B10 60J80)},
       DOI = {10.1007/s10959-016-0707-3},
       URL = {https://doi.org/10.1007/s10959-016-0707-3},
}

@article{stufler2022rerooting,
  title={Rerooting multi-type branching trees: the infinite spine case},
  author={Stufler, Benedikt},
  journal={Journal of Theoretical Probability},
  volume={35},
  number={2},
  pages={653--684},
  year={2022},
  publisher={Springer}
}

@Article{M08,
title = {Invariance principles for spatial multitype {G}alton-{W}atson trees},
author = {Miermont, Gr\'egory},
journal = {Ann. Inst. H. Poincar\'e Probab. Statist.},
volume = {44},
number={6},
year = {2008},
pages = {1128-1161},
}

@article{Neveu,
author = {Neveu, Jacques},
journal = {Ann. Inst. H. Poincar\'e Probab. Statist.},
number = {2},
pages = {199-207},
title = {Arbres et processus de {G}alton-{W}atson},
volume = {22},
year = {1986},
}
\end{document}